\DeclareMathAlphabet{\mathcal}{OMS}{cmsy}{m}{n}
\DeclareSymbolFont{largesymbols}{OMX}{cmex}{m}{n}
\renewcommand{\thefootnote}{\fnsymbol{footnote}}
\newtheorem{theorem}{Theorem}[section]
\newtheorem{remark}[theorem]{Remark}
\newcommand{\ep}{\hfill\rule{0.15cm}{0.35cm}\vskip 0.3cm}
\newenvironment{proof}[1][Proof.]{ \begin{trivlist}
\item[\hskip \labelsep {\bfseries #1}]}{\ep\end{trivlist}}
\numberwithin{equation}{section}
\begin{document}

\begin{frontmatter}
\title{Local convergence analysis of L1/finite element scheme for a constant delay reaction-subdiffusion equation with uniform time mesh}
\renewcommand{\thefootnote}{\fnsymbol{footnote}}
\renewcommand{\thefootnote}{\fnsymbol{footnote}}
\author[xtu]{Weiping Bu\corref{cor}}\ead{weipingbu@xtu.edu.cn}
\author[xtu]{Xin Zheng}
\address[xtu]{School of Mathematics and Computational Science$\And$Hunan Key Laboratory for Computation and Simulation in Science and Engineering,
Xiangtan University, Hunan 411105, China}
\cortext[cor]{Corresponding author.}
\begin{abstract}
The aim of this paper is to develop a refined error estimate of L1/finite element scheme for a reaction-subdiffusion equation with constant delay $\tau$ and uniform time mesh. Under the non-uniform multi-singularity assumption of exact solution in time, the local truncation errors of the L1 scheme with uniform mesh is investigated. Then we introduce a fully discrete finite element scheme of the considered problem. Next, a novel discrete fractional Gr\"onwall inequality with constant delay term is proposed, which does not include the increasing Mittag-Leffler function comparing with some popular other cases. By applying this Gr\"onwall inequality, we obtain the pointwise-in-time and piecewise-in-time error estimates of the finite element scheme without the Mittag-Leffler function. In particular, the latter shows that, for the considered interval $((i-1)\tau,i\tau]$, although the convergence in time is low for $i=1$, it will be improved as the increasing $i$, which is consistent with the factual assumption that the smoothness of the solution will be improved as the increasing $i$. Finally, we present some numerical tests to verify the developed theory.


\end{abstract}
\begin{keyword}
Reaction-subdiffusion equation with constant time delay, L1 scheme, Finite element method, Piecewise-in-time error estimate

\end{keyword}
\end{frontmatter}
\section{Introduction}
\label{Sec1}
As is known to all, fractional operator has nonlocal and memory properties, which allows fractional differential equations to more accurately describe the complex diffusion behavior of systems in simulating abnormal diffusion phenomena. In the last several decades, fractional models have been extensively applied in science and engineering \cite{Hilfer2000Applications,Magin2010Fractional,Jajarmi2018A,Sun2018A,Chatterjee2021A}. It is worth noting that there are usually some models established in real life including biology, epidemic and predation \cite{Carvalho2017A, Rihan2018Applications,Ma2020Spatiotemporal, Kumar2023The} which depend on the history. In recent years, some models with both fractional operators and time delay have attracted the interest of a large number of  researchers\cite{Ding2018Analytical,Alquran2019Delay,Rihan2019A,Alidousti2019Stability,Barman2022Modelling,Yao2023Stability,Rihan2023Fractional,Mvogo2023Dynamics}. Compared with the theoretical studies, the numerical methods of these models are still quite limited. In \cite{Tan2022L1}, an L1/finite element
method with the symmetrical time graded mesh was proposed to solve the following constant time delay subdiffusion equation
\begin{equation}\label{eqs1 1}
      _0^CD_t^\alpha u(x,t)=p\partial_x^2u(x,t)+a(x)u(x,t)+bu(x,t-\tau)+f(x,t),\quad (x,t)\in \Omega \times (0, K\tau],
\end{equation}
with the initial and boundary conditions
\begin{equation}\label{eqs1 2}
    u(x,t)=\phi(x,t),\quad (x,t)\in \bar{\Omega} \times [-\tau, 0] \quad \text{and} \quad u(x,t)|_{\partial \Omega}=0,\quad t \in [-\tau, K\tau],
\end{equation}
where $\Omega = (0, L)$, $L, p$ are some positive constants, $a(x)\leq0, f(x,t)$ are continuous functions, $\tau>0$ is the time delay parameter, $K$ is a given positive integer, $b$ is a constant, $\phi(x,t) $ is a continuous function in $\bar{\Omega} \times \left[-\tau,0\right]$, and the Caputo fractional derivative of order $0< \alpha <1$ is defined by

\begin{equation}\label{eqs1 4}
    _0^CD_t^\alpha u(x,t)=\int_0^t\omega_{1-\alpha}(t-s)\partial_su(x,s)ds,\quad\omega_\alpha(t)=\frac{t^{\alpha-1}}{\Gamma(\alpha)}.
\end{equation}
In this paper, we reconsider the L1/finite element method for problem (\ref{eqs1 1})--(\ref{eqs1 2}) based on the uniform time mesh and establish the local error estimate.

At present, the theoretical study of time delay fractional diffusion equations has made some progress. In \cite{Ouyang2011Existence}, Ouyang discussed some sufficient conditions of existence of solutions for a nonlinear fractional difference equation with time delay.
By using some other technique, Zhu et al. \cite{Zhu2019Existence} declared that they improved and generalized the above results mentioned in \cite{Ouyang2011Existence}. Prakash et al. \cite{Prakash2020Exact} devised a method to search the exact solution of the constant delay time fractional diffusion equations. Karel et al. \cite{Van2022On} discussed the existence and uniqueness of the exact solutions for a variable-order time fractional diffusion equation with delay. To numerically solve the time fractional diffusion equation with delay, there have been some methods such as the finite difference method, the spectral element method and the finite element method \cite{Zhang2017Analysis,Li2018Convergence,Hendy2020Convergence,Pimenov2017A,Dehghan2018A,Zaky2023An,Peng2023Convergence,Peng2024Unconditionally}.
In particular, in \cite{Pimenov2017On,Abbaszadeh2021A}, the authors have extended the discussion of numerical solution to constant delay subdiffusion equation with distribution order time fractional derivative. Furthermore, there was also some researchers who proposed fast algorithm for the subdiffusion equation with delay to reduce the computation cost \cite{Zhao2018A}.

In recent, Tan et al. \cite{Tan2022L1} investigate the problem (\ref{eqs1 1})--(\ref{eqs1 2}) and discover that the solution of this problem exhibits multiple singularities.
Subsequently, for a time fractional ordinary differential equation with constant time delay, Cen and Vong \cite{Cen2023The} obtain a sharp regularity of the solution. The result shows that  non-uniform multi-singularity will appear near the integer multiples of delay. To overcome this singularity, two effective corrected $L$-type methods are developed in \cite{Cen2023Corrected}.
For a generalized time delay fractional diffusion equation, Bu et al. \cite{Bu2024Finite} use the method of variable separation and the Laplace transform method to derive the analytical solution, and obtain the temporal regularity of the solution similar to \cite{Cen2023The}. Furthermore, based on the symmetric time graded mesh,  an effective L1/finite element method is established for the considered problem and an optimal convergence rate is obtained in time.

It is worth pointing out that, in order to achieve optimal convergence result,  the symmetric time graded mesh parameter $r\geq \frac{2-\alpha}{\alpha}$ needs to be ensured in \cite{Tan2022L1}. However, when $\alpha$ goes to small, the mesh will become dense near each delay point. In this case, some round-off errors may occur leading to additional difficulties in practical applications.
In fact, Refs. \cite{Cen2023The,Bu2024Finite} discover that some time fractional differential equations with constant delay time have the following regularity result
\begin{equation}\label{eq240530a}
  |\partial_{t}u_{k\tau}(t)|\leq C\bigg(1+\big(t-(k-1)\tau\big)^{k\alpha-1}\bigg),
\end{equation}
where $t\in \left((k-1)\tau, k\tau \right],k=1,2,\cdots,K$ and $\partial_{t}$ denotes the first temporal derivative. This implies that the smoothness of the solution to problem (\ref{eqs1 1})--(\ref{eqs1 2}) will be improved from $\left((k-1)\tau, k\tau \right]$ to $\left(k\tau, (k+1)\tau \right]$. Therefore, although Refs. \cite{Tan2022L1,Bu2024Finite} only obtain the global temporal convergence of order $\alpha$ under the uniform mesh, we still believe that the convergence order can be improved. Besides, we note that there have been some convergence studies on L1 scheme of fractional differential equations based on the singularity exact solution and the uniform mesh \cite{Li2021Sharp,Jiang2024Local}, meanwhile little work discusses the case with delay. Thus, the above reasons motivate us to consider local temporal convergence analysis on L1 scheme of constant delay subdiffusion equation (\ref{eqs1 1})--(\ref{eqs1 2}) under the multi-singularity  (\ref{eq240530a}) and the uniform mesh.

In this paper, in order to improve the convergence result of Ref. \cite{Tan2022L1} and  establish local convergence analysis of the L1/finite element scheme for the problem (\ref{eqs1 1})--(\ref{eqs1 2}) under the multi-singularity and the uniform time mesh, we propose a novel fractional Gr\"onwall inequality firstly. Actually, this inequality can be regarded a generalization from the case of classic time fractional diffusion problem to that of subdiffusion equation with constant time delay. Unlike the Gr\"onwall inequality developed in \cite{Tan2022L1}, the present inequality exhibits consistent estimate, i.e., this inequality does not contain increasing functions such as Mittag-Leffler function. Then, using the developed fractional Gr\"onwall inequality, we obtain pointwise-in-time error estimate of the considered numerical scheme. The convergence result implies that
$$
\max_{(i-1)N+1\leq n\leq iN} \|u_h^n - U_h^n\|_0 \lesssim \rho^{\min\{1,i\alpha\}}+h^2,\quad i=1,2,\cdots,K,
$$
where $\rho,h$ are the temporal and spacial sizes, $u_h^n, U_h^n$ denote theoretical and numerical solutions, respectively. It indicates that, for $i\alpha \leq 1$, the local  temporal convergence order is $i\alpha$ in the sense of maximum time error on the interval $((i-1)\tau, i\tau]$ rather than $\alpha$ given in \cite{Tan2022L1}. It is obvious that this result is more reasonable, because the exact solution will becomes smoother in time when $i$ increase.

This article are organized as follows. In Section2, We introduce some related symbols, regularity assumptions and discrete schemes for problem (\ref{eqs1 1})--(\ref{eqs1 2}), and derive the local truncation error of L1 scheme on uniform mesh. In Section3, we deduced a novel discrete fractional Gr\"onwall inequality in detail. In Section4, We using Gr\"onwall inequality to study the convergence. In Section5, a numerical experiment are displayed to check the theoretical results.

\textbf{Notation}: In this paper, $C,C_1,C_2$ is a general constant, which can take different values in different places.

\section{Numerical scheme of (\ref{eqs1 1})--(\ref{eqs1 2})}
\label{Sec2}
In this section, we introduce the numerical scheme of (\ref{eqs1 1})--(\ref{eqs1 2}). For given positive integer $N$ and interval $[-\tau, K\tau]$, let $t_n=\frac{n\tau}{N},\ n=-N,-N+1,\cdots,KN$ to divide this interval into $(K+1)N$ subintervals. Denote $\rho=\frac{\tau}{N}$, $u^{n}=u(t_{n})$ and  $\begin{aligned}\nabla_tu^n=u(t_n)-u(t_{n-1})\end{aligned}$. Then, using the famous L1 formula, the Caputo fractional derivative $_0^CD_t^\alpha u(t),0<\alpha<1$ can be approximated as \cite{Tan2022L1,Bu2024Finite}
\begin{equation}\label{eqs3 1}
    \begin{aligned}
_0^CD_t^\alpha u(t_n)
&=\sum_{k=0}^{n-1}a_{k}\nabla_tu^{n-k}+(R_t^\alpha)^n\\
&:=D^\alpha_N u^n + (R_t^\alpha)^n, \   1\le n\le KN,
\end{aligned}
\end{equation}
where the coefficients $a_{k}=\frac{\left(k+1\right)^{1-\alpha}-k^{1-\alpha}}{\Gamma(2-\alpha)\rho^\alpha},k=0,1,\cdots$ are positive and monotone decreasing, and $(R_t^\alpha)^n$ is the corresponding local truncation error.

Without loss of generality, suppose that the regularity of the solution to (\ref{eqs1 1})--(\ref{eqs1 2}) in the remainder of this paper satisfies
\begin{equation}\label{eqs2 1}
         |\partial_{x}^{m}u_{k\tau}(x,t)|\leq C_1, \
         |\partial_{t}^{m}u_{k\tau}(x,t)|\leq C_2\bigg(1+\big(t-(k-1)\tau\big)^{k\alpha-m}\bigg),\  m=0,1,2,
\end{equation}
where $(x,t)\in \bar{\Omega} \times \left((k-1)\tau,k\tau\right],k=1,2,\cdots,K$. For more details on the discussion of regularity, we refer the readers to \cite{Tan2022L1,Cen2023The,Bu2024Finite}.
Now we give out the local truncation error $(R_t^\alpha)^n$ of L1 scheme on uniform mesh. 

\newtheorem{lem}{Lemma}[section]
\begin{lem}\label{jullemma5.2} Assume that $u$ be the exact solution of (\ref{eqs1 1})-(\ref{eqs1 2}) satisfying the regularity assumption (\ref{eqs2 1}). Then one has
\begin{equation}\label{eqs5 14}
    \left| (R_t^\alpha)^n \right|\leq \left\{ \begin{aligned}
	&C_1 \sum_{l=1}^i{\rho ^{\left( l-1 \right) \alpha }\left( n-\left( l-1 \right) N \right) ^{ \left(l-2 \right) \alpha -1 }}, &0<\alpha \le \frac{1}{2},\\
	&C_2 \sum_{l=1}^i{\rho ^{ \left( l-1 \right) \alpha }\left( n-\left( l-1 \right) N \right) ^{l\alpha-2 }}, &\frac{1}{2}<\alpha <1,\\
\end{aligned} \right.
\end{equation}
where $(i-1)N+1\leq n\le iN,~~1\leq i\leq K.$
\end{lem}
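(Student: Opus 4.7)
The natural starting point is to write
\begin{equation*}
(R_t^\alpha)^n \;=\; \sum_{j=1}^{n}\int_{t_{j-1}}^{t_j}\omega_{1-\alpha}(t_n-s)\bigl(u'(s) - \overline{u'}_{j}\bigr)\,ds,
\end{equation*}
where $\overline{u'}_{j} := \rho^{-1}\nabla_t u^{j}$ is the piecewise-constant L1 approximation of $u'$ on $(t_{j-1},t_j]$, and then to group the cells according to which delay subinterval contains them. For each $l = 1,\ldots,i$, collect the indices $j$ with $t_j \in ((l-1)\tau, l\tau]$; on these cells the regularity (\ref{eqs2 1}) gives $|u'(s)| \le C(1 + (s-(l-1)\tau)^{l\alpha-1})$ and $|u''(s)| \le C(1 + (s-(l-1)\tau)^{l\alpha-2})$. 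The outer sum over $l$ in (\ref{eqs5 14}) should mirror this partition of the integration range, with the factor $\rho^{(l-1)\alpha}$ arising from the size of the singular part of $u'$ evaluated at distance $\rho$ from the restart point $(l-1)\tau$.

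Within each group of cells I would use two different local bounds. On the \emph{first} cell of the group, i.e.\ $j = (l-1)N + 1$, the second derivative is not integrable at $(l-1)\tau$, so I would estimate directly
\begin{equation*}
|u'(s) - \overline{u'}_{j}| \;\le\; |u'(s)| + \rho^{-1}\int_{t_{j-1}}^{t_j}|u'(\eta)|\,d\eta
\end{equation*}
and insert the singular $|u'|$-bound. On the \emph{remaining} cells of the group I would use the Taylor-type identity $u'(s) - \overline{u'}_{j} = -\rho^{-1}\int_{t_{j-1}}^{t_j}\!\int_s^{t_j} u''(\xi)\,d\xi\,ds$, so that $|u'(s)-\overline{u'}_j| \le \int_{t_{j-1}}^{t_j}|u''(\xi)|\,d\xi$, and apply the $|u''|$-bound. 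Multiplying by the weight $\omega_{1-\alpha}(t_n-s)$ and integrating on each cell then reduces the task to evaluating discrete convolutions of $\omega_{1-\alpha}$ with either $(s-(l-1)\tau)^{l\alpha-1}$ or $(s-(l-1)\tau)^{l\alpha-2}$.

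The dichotomy at $\alpha = 1/2$ in (\ref{eqs5 14}) is the standard L1 sharp threshold. For $0 < \alpha \le 1/2$ the exponent $l\alpha - 2$ in $u''$ is sharp enough that, once convolved with $\omega_{1-\alpha}$, the first-cell contribution using $|u'|$ dominates and produces the power $(n-(l-1)N)^{(l-2)\alpha-1}$; for $1/2 < \alpha < 1$ the Taylor bound on the later cells dominates and produces $(n-(l-1)N)^{l\alpha-2}$. I expect the main obstacle to be the careful bookkeeping of these power sums. In particular, one has to estimate $\sum_{j}\rho\,\omega_{1-\alpha}(t_n-t_{j-1})(t_{j-1}-(l-1)\tau)^{l\alpha-2}$ by splitting it at the midpoint of the group and by handling the cases $l < i$ (where $s$ stays bounded away from $t_n$, so the weight $\omega_{1-\alpha}(t_n-s)$ is essentially $((i-l)\tau)^{-\alpha}$) and $l = i$ (where the weight itself becomes singular as $s \uparrow t_n$, and the sum must be compared to the standard single-interval L1 bound of Chen--Stynes type) separately, and then verifying that the endpoints of integration match the integer index $n - (l-1)N$ appearing in (\ref{eqs5 14}).
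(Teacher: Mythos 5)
Your overall skeleton (write $(R_t^\alpha)^n$ as a sum of cell integrals of $\omega_{1-\alpha}(t_n-s)\bigl(u'(s)-\overline{u'}_j\bigr)$, group the cells by delay subinterval, treat the first cell of each group with the $|u'|$-bound and the rest with the $|u''|$-bound, then split each convolution sum at the midpoint) is the standard way to prove such a bound from scratch. The paper, however, does none of this: it simply quotes Lemma 5.3 of \cite{Bu2024Finite}, which already gives
$\left|(R_t^\alpha)^n\right|\leq C\sum_{l=1}^i\rho^{\min\{l\alpha+1,2-\alpha\}}\bigl(t_n-(l-1)\tau\bigr)^{(l-1)\alpha-\min\{l\alpha+1,2-\alpha\}}$,
and then converts $t_n-(l-1)\tau=\rho\,(n-(l-1)N)$ and uses $n-(l-1)N\geq1$ together with $\min\{l\alpha+1,2-\alpha\}\geq\min\{\alpha+1,2-\alpha\}$ to split into the two cases at $\alpha=\tfrac12$. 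So the entire analytic content you are proposing to redo lives in the cited reference.

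More importantly, your plan as written has a genuine gap: on the first cell of each group, and on all cells in the ``far'' half of each group, you bound $|u'(s)-\overline{u'}_j|$ in absolute value and multiply by the weight $\omega_{1-\alpha}(t_n-s)$ (or $\omega_{1-\alpha}(t_n-t_{j-1})$). This discards the cancellation $\int_{t_{j-1}}^{t_j}\bigl(u'(s)-\overline{u'}_j\bigr)\,ds=0$, which is exactly what produces the extra decay. Concretely, for the first cell of group $l$ your estimate gives a contribution of order $\rho^{(l-1)\alpha}\bigl(n-(l-1)N\bigr)^{-\alpha}$, whereas the target is $\rho^{(l-1)\alpha}\bigl(n-(l-1)N\bigr)^{(l-2)\alpha-1}$; these differ by a full factor of $n-(l-1)N$ whenever $(l-1)\alpha<1$, and in particular always for $l=1$, where the correct rate $n^{-(1+\alpha)}$ (the sharp Chen--Stynes/Li--Qin--Zhang pointwise bound) is only reachable by replacing $\omega_{1-\alpha}(t_n-s)$ with the difference $\omega_{1-\alpha}(t_n-s)-\omega_{1-\alpha}(t_n-t_j)$ before estimating, so that each far cell picks up the factor $\rho\,(t_n-t_j)^{-\alpha-1}$ instead of $(t_n-t_j)^{-\alpha}$. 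The same correction is needed for the displayed sum $\sum_j\rho\,\omega_{1-\alpha}(t_n-t_{j-1})(t_{j-1}-(l-1)\tau)^{l\alpha-2}$ over the non-first cells in the half of the group away from $t_n$. Without this kernel-difference (or equivalent summation-by-parts) step, your argument proves only $\left|(R_t^\alpha)^n\right|\lesssim\sum_l\rho^{(l-1)\alpha}(n-(l-1)N)^{-\alpha}$, which is strictly weaker than (\ref{eqs5 14}) and would not support the $\min\{1,i\alpha\}$ local rates claimed later in the paper.
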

\begin{proof}
It follows from \cite[Lemma 5.3]{Bu2024Finite} that, for the uniform mesh, the local truncation error $\left| (R_t^\alpha)^n \right|$ can be bounded as
$$
\begin{aligned}
   \left| (R_t^\alpha)^n \right| &\leq C \sum_{l=1}^{i}\rho^{\min\{l\alpha+1, 2-\alpha\}}(t_n-(l-1)\tau)^{(l-1)\alpha-\min\{l\alpha+1, 2-\alpha\}} \\
   &\leq C\sum_{l=1}^{i} \rho^{(l-1)\alpha}(n-(l-1)N)^{(l-1)\alpha} (n-(l-1)N)^{-\min\{\alpha+1, 2-\alpha\}}.
\end{aligned}
$$
Thus, for $0<\alpha \le \frac{1}{2}$, it leads to
$$
\begin{aligned}
   \left| (R_t^\alpha)^n \right| & \leq C_1\sum_{l=1}^{i} \rho^{(l-1)\alpha}(n-(l-1)N)^{(l-1)\alpha} (n-(l-1)N)^{-(\alpha+1)} \\
   &\leq  C_1 \sum_{l=1}^i{\rho ^{\left( l-1 \right) \alpha }\left( n-\left( l-1 \right) N \right) ^{ \left(l-2 \right) \alpha -1 }},
\end{aligned}
$$
and for $ \frac{1}{2}<\alpha <1$, it yields
$$
\begin{aligned}
   \left| (R_t^\alpha)^n \right| & \leq C_2\sum_{l=1}^{i} \rho^{(l-1)\alpha}(n-(l-1)N)^{(l-1)\alpha} (n-(l-1)N)^{-(2-\alpha)} \\
   &\leq  C_2 \sum_{l=1}^i{\rho ^{ \left( l-1 \right) \alpha }\left( n-\left( l-1 \right) N \right) ^{l\alpha-2 }}.
\end{aligned}
$$
This completes the proof.
\end{proof}

Next we derive the numerical scheme.
By (\ref{eqs3 1}), we can obtain the semi-discrete scheme of (\ref{eqs1 1})--(\ref{eqs1 2}) as follows: find $U^n\in H^1_0(\Omega),\ n=1,2,\cdots,KN$ such that
\begin{equation}\label{eqsa20240601}
  (D^\alpha_N U^n,v) + B(U^n,v)=  b(U^{n-N}, v) + (f^n,v), \forall v\in H^1_0(\Omega),
\end{equation}
where $U^n=\phi(x,t_n),n=-N,-N+1,\cdots,0$ $(u,v):=\int_0^Luvdx$, and $B(u,v):=p\left(\frac{d u}{dx},\frac{dv}{dx}\right) - (a(x)u,v).$ Define $\|u\|_l=\|u\|_{H^l(0,L)},\ l\geq0$.  It is obvious that the bilinear form $B(u,v)$ satisfying
\begin{equation}\label{eq240606a}
B(u,u)\geq C_1\|u\|_1^2,\ B(u,v)\leq C_2\|u\|_1\|v\|_1.
\end{equation}
Let $X_h$ be a continuous piecewise linear finite element space in $H_0^1(0,L)$ based on quasi-uniform mesh with maximum diameter $h$. We propose the fully discrete finite element scheme of (\ref{eqs1 1})--(\ref{eqs1 2}): find $U^n_h\in X_h$ satisfying
\begin{equation}\label{eqs3 7}
\left(D^{\alpha}_NU_h^n,v\right)+B\big(U_h^n,v\big)=b\big(U_h^{n-N},v\big)+\big(f^n,v\big),\quad \forall v \in X_h,
\end{equation}
 where $U^{n}_h \in X_h, n=-N,-N+1,\cdots,0$ are suitable approximation of $\phi(x,t_n)$.  
%


\section{Discrete fractional Gr\"onwall inequality with constant delay term}
In this section, we develop a novel Gr\"onwall inequality which will be used to the local convergence analysis of the finite element scheme (\ref{eqs3 7}). Let
$$
K_{\beta,n}=\begin{cases}1+\frac{1-n^{1-\beta}}{\beta-1},&\beta\ne1,\\1+\ln n,&\beta=1.\end{cases}
$$
 Define a family of discrete coefficients $P_{l}, l=0,1,\cdots$ recursively by
\begin{equation}\label{eqs4 1}
P_{0}=\frac{1}{a_{0}},\ P_{n-k}=\frac{1}{a_{0}}\sum_{j=k+1}^n{P_{n-j}}\left( a_{j-\left( k+1 \right)}-a_{j-k} \right) ,\ 1\leq  k\leq  n-1.
\end{equation}
For these coefficients, there are several useful results as follows. In view of the fact that there will be too many estimation formulas in this section, in order to avoid too many $C$ in the derivation, we denote $y\lesssim z$ as $y \leq Cz$ here.
\begin{lem}\label{jullemma5.3}\cite{Li2021Sharp} For the discrete coefficient $ P_{l}$ defined in (\ref{eqs4 1}), then the following results hold
\begin{equation*}
    \begin{aligned}
    &(\romannumeral1)\quad\sum_{j=k}^n P_{n-j}a_{j-k}=1,\quad 1\leq k\leq n;\\
    &(\romannumeral2)\quad P_l<\Gamma(2-\alpha)\rho^\alpha\left(l+1\right)^{\alpha-1},\quad 0\leq l \leq n;\\
    &(\romannumeral3)\quad\ \sum_{j=1}^n{P_{n-j}\leq \frac{t_{n}^{\alpha}}{\Gamma \left( 1+\alpha \right)}}.
    \end{aligned}
\end{equation*}
\end{lem}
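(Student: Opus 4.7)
My plan would treat the three statements separately, since they call for rather different techniques.

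For (i), I would first rewrite the claim in convolution form. Setting $m=n-k$ and $s=n-j$ turns the asserted identity into $(P\ast a)_m:=\sum_{s=0}^{m}P_s a_{m-s}=1$ for every $m\ge 0$. I then argue by induction on $m$. The base case $m=0$ is $P_0 a_0=1$, which is immediate from $P_0=1/a_0$. For the step, split off the $s=m$ term: $(P\ast a)_m = P_m a_0 + \sum_{s=0}^{m-1}P_s a_{m-s}$. The recursion (rewritten in the same way) gives $a_0 P_m = \sum_{s=0}^{m-1}P_s(a_{m-1-s}-a_{m-s})$, so plugging in causes the $a_{m-s}$ pieces to cancel and leaves $\sum_{s=0}^{m-1}P_s a_{m-1-s}=(P\ast a)_{m-1}$, which equals $1$ by induction.

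For (ii), I would again induct on $l$ from the rewritten recursion $P_l=\frac{1}{a_0}\sum_{s=0}^{l-1}P_s(a_{l-1-s}-a_{l-s})$. The base $l=0$ is an equality $P_0=\Gamma(2-\alpha)\rho^\alpha$. For the step I would substitute the inductive bound $P_s\le\Gamma(2-\alpha)\rho^\alpha(s+1)^{\alpha-1}$ into the sum and estimate the differences $a_{l-1-s}-a_{l-s}$ using the concavity of $x^{1-\alpha}$ on $(0,\infty)$ (which also gives positivity of these differences). The resulting convolution bound needs to be controlled by $(l+1)^{\alpha-1}$, and this is the technically most painful step: one has to split the sum into a "near the endpoint" and a "bulk" regime, estimate each by an integral of the form $\int_0^l (s+1)^{\alpha-1}\,d\!\bigl[(l-s)^{1-\alpha}\bigr]$, and use the Beta-function/incomplete Beta-function identity to recover the $(l+1)^{\alpha-1}$ scaling.

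For (iii), the naive route of combining (ii) with $\sum_{m=1}^{n}m^{\alpha-1}\le n^\alpha/\alpha$ yields a constant $\Gamma(2-\alpha)/\alpha$ rather than the sharp $1/\Gamma(1+\alpha)$, so I would instead use a Green's-function/comparison argument. By (i), the sequence $v^n:=\sum_{j=1}^{n}P_{n-j}=\sum_{l=0}^{n-1}P_l$ satisfies $D^\alpha_N v^n = \sum_{s=0}^{n-1}P_s a_{n-1-s}=1$ with $v^0=0$. Now take $V^n=t_n^\alpha/\Gamma(1+\alpha)$, the exact solution of $\,{}_0^C D_t^\alpha V=1$, $V(0)=0$. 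The crucial claim is $D^\alpha_N V^n\ge 1$. On each subinterval $(t_{j-1},t_j)$ the L1 term is $(V^j-V^{j-1})a_{n-j}=\frac{1}{\rho}\!\int_{t_{j-1}}^{t_j}\!V'\,ds\cdot \int_{t_{j-1}}^{t_j}\!\omega_{1-\alpha}(t_n-s)\,ds$. Since $V'(s)=s^{\alpha-1}/\Gamma(\alpha)$ is decreasing on $(0,t_n)$ while $s\mapsto\omega_{1-\alpha}(t_n-s)$ is increasing, Chebyshev's integral inequality on each subinterval gives $\int V'\omega_{1-\alpha}(t_n-\cdot)\,ds\le\frac{1}{\rho}(\int V')(\int\omega_{1-\alpha})$, i.e. the exact contribution is bounded above by the L1 contribution; summing over $j$ yields $1={}_0^CD_t^\alpha V(t_n)\le D^\alpha_N V^n$. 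Hence $D^\alpha_N(V^n-v^n)\ge 0$ with $V^0-v^0=0$. Using the representation $V^n-v^n=\sum_{j=1}^{n}P_{n-j}\bigl(D^\alpha_N(V-v)\bigr)^j$ together with $P_l\ge 0$, I conclude $V^n\ge v^n$, which is exactly (iii). The main obstacle, as indicated, is the sharp constant in (iii): without the Chebyshev/comparison step one cannot recover $1/\Gamma(1+\alpha)$ from (ii) alone.
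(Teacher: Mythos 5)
The paper does not actually prove this lemma: it is imported verbatim from \cite{Li2021Sharp}, so there is no in-paper argument to compare against and your attempt has to stand on its own. Parts (i) and (iii) of your proposal do. For (i), the convolution rewriting plus the one-line induction is exactly the standard argument and is complete. For (iii), your comparison argument is correct and, as far as I can tell, essentially the argument used in the cited literature: you correctly observe that summing the bound from (ii) only yields the constant $\Gamma(2-\alpha)/\alpha$ rather than $1/\Gamma(1+\alpha)$; the identity $D_N^\alpha v^n=(P*a)_{n-1}=1$ is right; the Chebyshev inequality on each subinterval (with $V'$ decreasing and $\omega_{1-\alpha}(t_n-\cdot)$ increasing, both integrable despite the endpoint singularities at $s=0$ and $s=t_n$) does give $D_N^\alpha V^n\geq{}_0^CD_t^\alpha V(t_n)=1$; and the representation $w^n=\sum_{j=1}^nP_{n-j}(D_N^\alpha w)^j$ from (i) together with $P_l\geq 0$ closes the argument. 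You should state explicitly that $P_l\geq 0$, but it is a one-line induction from the recursion and the monotonicity of $a_k$, so this is cosmetic.

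The genuine gap is in (ii). Your base case correctly notes $P_0=\Gamma(2-\alpha)\rho^\alpha$ exactly (which incidentally shows the strict inequality in the statement should be $\leq$ at $l=0$), but the inductive step is only a plan, and it is precisely the step with no room for error. Substituting the inductive hypothesis into $a_0P_l=\sum_{s=0}^{l-1}P_s(a_{l-1-s}-a_{l-s})$ reduces the claim to
\begin{equation*}
\sum_{s=0}^{l-1}(s+1)^{\alpha-1}\bigl(a_{l-1-s}-a_{l-s}\bigr)\;\leq\;(l+1)^{\alpha-1}a_0 ,
\end{equation*}
and this must hold with constant exactly $1$, or the induction does not close; moreover the inequality becomes tight as $\alpha\to 1$ (already at $l=1$ both sides tend to $1$), so no lossy estimate can work. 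The integral surrogate you invoke is essentially $\int_0^l(s+1)^{\alpha-1}(l-s)^{-1-\alpha}\,ds$, which diverges at $s=l$: the Beta-function identity you appeal to does not apply, and the finiteness of the discrete sum comes entirely from the endpoint term $l^{\alpha-1}(a_0-a_1)$. So the ``near the endpoint'' versus ``bulk'' splitting is not a technical afterthought but the whole proof, and as written it is asserted rather than carried out. Either supply that computation (a summation by parts puts the needed inequality in the form $\sum_{s=1}^{l-1}[s^{\alpha-1}-(s+1)^{\alpha-1}]\,b_{l-s}\leq (l+1)^{\alpha-1}-l^{\alpha-1}+b_l$ with $b_k=(k+1)^{1-\alpha}-k^{1-\alpha}$, which is where the real estimate lives), or do what the paper does and quote the bound from \cite{Li2021Sharp}.
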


\begin{lem}\label{jullemma5.4}\cite{Chen2021Blow} Assume that $\beta$ be a non-negative constant. Then
$$
\sum_{j=1}^n j^{-\beta}\lesssim  K_{\beta,n},\ n=1,2,\cdots.
$$
Moreover, $K_{\beta,n}$  is a strictly decreasing function about $\beta$ and one has $\operatorname*{lim}_{\beta\rightarrow1}K_{\beta,n}=K_{1,n}$ for each $n$.
\end{lem}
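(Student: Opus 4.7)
The plan is to prove the three assertions separately by elementary calculus. For the summation bound, I would use an integral comparison against $x\mapsto x^{-\beta}$; for the monotonicity in $\beta$ and the limit at $\beta=1$, I would observe that $K_{\beta,n}-1=\int_1^n x^{-\beta}\,dx$ (with the $\beta=1$ case interpreted as $\ln n$) and then invoke standard facts about parameter integrals.

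First I would establish $\sum_{j=1}^n j^{-\beta}\leq K_{\beta,n}$, which is already stronger than the claimed $\lesssim$. Splitting off the $j=1$ term, the tail satisfies
$$\sum_{j=2}^{n} j^{-\beta}\;\le\;\int_1^{n} x^{-\beta}\,dx$$
for all $\beta>0$ by the monotone decrease of $x^{-\beta}$ on $[1,\infty)$; evaluating the right-hand side gives $(1-n^{1-\beta})/(\beta-1)$ when $\beta\neq 1$ and $\ln n$ when $\beta=1$, so in both cases the bound $1+\sum_{j=2}^n j^{-\beta}\le K_{\beta,n}$ follows with implicit constant one. The case $\beta=0$ is a trivial equality since $K_{0,n}=n$.

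Next I would treat strict monotonicity and continuity together. Rewriting $K_{\beta,n}-1=\int_1^n x^{-\beta}\,dx$ for all $\beta\neq 1$, the Taylor expansion $n^{1-\beta}=1+(1-\beta)\ln n+O\bigl((1-\beta)^2\bigr)$ gives $(1-n^{1-\beta})/(\beta-1)\to \ln n$ as $\beta\to 1$, which proves the stated limit and shows that $K_{\beta,n}$ extends continuously across $\beta=1$. For strict monotonicity (implicitly in the regime $n\ge 2$), differentiation under the integral sign yields
$$\tfrac{d}{d\beta}\bigl(K_{\beta,n}-1\bigr)\;=\;-\int_1^n (\ln x)\,x^{-\beta}\,dx\;<\;0,$$
since $\ln x>0$ on $(1,n]$; the identity persists at $\beta=1$ by the continuity just established, so $\beta\mapsto K_{\beta,n}$ is strictly decreasing on $[0,\infty)$.

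The only mild obstacle here is administrative: the piecewise definition of $K_{\beta,n}$ at $\beta=1$ tempts one into a clumsy case analysis. Viewing $K_{\beta,n}-1$ uniformly as the parameter integral $\int_1^n x^{-\beta}\,dx$ (with the value $\ln n$ supplied by continuity at $\beta=1$) collapses the piecewise definition, the $\lim_{\beta\to 1}$ claim, and the monotonicity into a single tidy framework. I would also flag the degenerate case $n=1$, where $K_{\beta,1}\equiv 1$ for every $\beta\ge 0$, so the word ``strictly'' in the lemma must be understood as implicitly requiring $n\ge 2$.
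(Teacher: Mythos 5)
Your proof is correct, and in fact sharper than required (you obtain the bound with implicit constant one). The paper itself gives no proof of this lemma --- it is quoted directly from the cited reference [Chen2021Blow] --- and your integral-comparison argument, together with the observation that $K_{\beta,n}-1=\int_1^n x^{-\beta}\,dx$ unifies the piecewise definition, the limit at $\beta=1$, and the strict monotonicity (for $n\ge 2$), is the standard route taken there; the caveat about $n=1$ is a fair, minor point.
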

\begin{lem}\label{jullemma5.5} For the discrete coefficient $ P_{l}$ defined in (\ref{eqs4 1}), we have
\begin{equation}\label{eqs5 15}
	\sum_{j=qN+1}^n{P_{n-j}\left( j-q N \right) ^{-\beta}}\lesssim \Gamma(2-\alpha) \rho ^{\alpha}{K_{\beta ,n}\left( n-q N \right) ^{\alpha -1}},\ \ \beta \ge 1,\\
\end{equation}
\begin{equation}
    \frac{1}{\Gamma \left( 1-\beta \right)}\sum_{j=qN+1}^n{P_{n-j}\left( j-q N \right) ^{-\beta}}\lesssim \frac{\Gamma(2-\alpha)\rho ^{\alpha}}{\Gamma \left( 1-\beta +\alpha \right)}{\left( n-q N \right) ^{\alpha -\beta}},\ \ \beta <1,
\end{equation}\label{eqs5 16}
where $q=0,1,2,\cdots i-1$, $ (i-1)N+1\leq n \leq iN$ and $1\leq i\leq K$.
\end{lem}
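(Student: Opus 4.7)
My plan is to reduce both estimates to a single discrete convolution sum and then dispatch it by splitting the summation range. First I would invoke Lemma~\ref{jullemma5.3}(ii) to replace $P_{n-j}$ by the explicit majorant $\Gamma(2-\alpha)\rho^{\alpha}(n-j+1)^{\alpha-1}$, and change variables through $m := n - qN$, $k := j - qN$. This reduces the task to estimating
$$
S_{\alpha,\beta}(m) := \sum_{k=1}^{m}(m-k+1)^{\alpha-1}\,k^{-\beta}
$$
uniformly in $m$, multiplied by the prefactor $\Gamma(2-\alpha)\rho^{\alpha}$.

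Next I would split $S_{\alpha,\beta}(m)$ at $k = \lfloor m/2\rfloor$. On the lower half ($1 \le k \le \lfloor m/2\rfloor$), the factor $(m-k+1)^{\alpha-1}$ is uniformly comparable to $m^{\alpha-1}$ (recalling $\alpha-1<0$), so it pulls out of the sum, and Lemma~\ref{jullemma5.4} then bounds the residual $\sum_{k=1}^{\lfloor m/2\rfloor} k^{-\beta}$ by $K_{\beta,\lfloor m/2\rfloor} \le K_{\beta,n}$. On the upper half ($k > \lfloor m/2\rfloor$), $k^{-\beta} \le (m/2)^{-\beta}$ pulls out, and the residual $\sum_{k>m/2}(m-k+1)^{\alpha-1}$ reindexes to $\sum_{l=1}^{\lceil m/2\rceil} l^{\alpha-1} \lesssim m^{\alpha}$ by a standard Riemann-sum comparison.

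When $\beta \ge 1$, the upper-half contribution is of order $m^{\alpha-\beta} \le m^{\alpha-1}$, which is absorbed into the lower-half bound $m^{\alpha-1}K_{\beta,n}$; this yields (\ref{eqs5 15}). When $\beta < 1$, combining both halves and using the explicit estimate $K_{\beta,m} \lesssim m^{1-\beta}/(1-\beta)$ (read off from the definition of $K_{\beta,n}$) produces an overall bound of order $m^{\alpha-\beta}$. To match the target prefactor $1/\Gamma(1-\beta+\alpha)$, I would then invoke the fact that the Gamma-function ratio $\Gamma(1-\beta+\alpha)/\Gamma(1-\beta)$ is uniformly bounded on $\beta \in [0,1)$ for fixed $\alpha \in (0,1)$, since it is continuous on $[0,1)$ and tends to $0$ as $\beta \to 1^{-}$ (the pole of $\Gamma(1-\beta)$ dominates).

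The main obstacle I anticipate is the $\beta < 1$ case, where the prefactor $1/\Gamma(1-\beta+\alpha)$ on the right and $1/\Gamma(1-\beta)$ on the left must be reconciled through the $\lesssim$-constant; a direct Beta-function integral comparison $\int_0^m (m-x)^{\alpha-1} x^{-\beta}\,dx = B(\alpha,1-\beta)\,m^{\alpha-\beta}$ would deliver the sharp Gamma factors, but care near the singular endpoint $x = 0$ is required. The simpler splitting argument, combined with the uniform boundedness of the $\Gamma$-ratio noted above, avoids this technicality and still produces the advertised bound.
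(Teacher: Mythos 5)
Your proof is correct, and for $\beta\ge 1$ it is essentially identical to the paper's: bound $P_{n-j}$ by Lemma~\ref{jullemma5.3}(ii), reindex, split the convolution at the midpoint, use Lemma~\ref{jullemma5.4} on the lower half and an integral comparison on the upper half, and absorb the upper half via $m^{\alpha-\beta}\le m^{\alpha-1}$. For $\beta<1$ you diverge: the paper keeps only the last term $j=n$ separate and compares the remaining sum directly to $\int_0^{m}s^{-\beta}(m-s)^{\alpha-1}\,ds=\mathcal{B}(1-\beta,\alpha)\,m^{\alpha-\beta}$, so the factor $\mathcal{B}(1-\beta,\alpha)/\Gamma(1-\beta)=\Gamma(\alpha)/\Gamma(1-\beta+\alpha)$ comes out exactly and no further reconciliation of constants is needed; you instead continue the midpoint splitting, which yields $S\lesssim m^{\alpha-\beta}/(1-\beta)$, and then patch the prefactor. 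That patch works, but be careful about how you phrase it: boundedness of $\Gamma(1-\beta+\alpha)/\Gamma(1-\beta)$ alone does not absorb your extra $1/(1-\beta)$; you should pair $1/(1-\beta)$ with $1/\Gamma(1-\beta)$ to get $1/\Gamma(2-\beta)$, which is bounded, and then use $\Gamma(1-\beta+\alpha)\lesssim 1$. Also, the lemma is later applied with exponents such as $\beta=-m\alpha$ and $\beta=1-(l-2)\alpha$, which can be negative, so the required uniformity is over a bounded range of $\beta<1$ determined by $K$ and $\alpha$ rather than over $[0,1)$; since only finitely many values of $\beta$ occur and the hidden constants may depend on $\alpha,K$, this is harmless, but the paper's Beta-function route avoids the issue entirely and delivers the sharp Gamma constants that it then carries through the induction in the subsequent lemmas.
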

\begin{proof}
For $\beta\ge 1$, it follows from Lemma \ref{jullemma5.3} that
$$
\begin{small}
 \begin{aligned}
  \sum_{j=qN+1}^n{P_{n-j}\left( j-q N \right) ^{-\beta}}\leq &\Gamma(2-\alpha)\rho ^{\alpha} \sum_{j=qN+1}^{n}\left( j-q N \right) ^{-\beta}(n-j+1)^{\alpha-1}\\
\leq &\Gamma(2-\alpha)\rho ^{\alpha} \sum_{i=1}^{n-qN}i^{-\beta} (n-i-qN+1)^{\alpha-1} \\
\leq &\Gamma(2-\alpha)\rho ^{\alpha}\left[\left(\frac{n-qN}{2}\right)^{\alpha-1} \sum_{i=1}^{\lceil \frac{n-qN}{2} \rceil}i^{-\beta}\right.\\
  &\left.+\left(\frac{n-qN}{2}\right)^{-\beta} \sum_{i=\lceil \frac{n-qN}{2} \rceil+1}^{n-qN}(n-i-qN+1)^{\alpha-1}\right],
\end{aligned}
\end{small}
$$
where  $\lceil y \rceil$ denotes an integer satisfying $y\leq \lceil y \rceil \leq y+1$.
Notice that $i-1\leq s\leq i\leq n-qN$ implies $(n-i-qN+1)^{\alpha-1}\leq (n-s-qN)^{\alpha-1}$. Then
\begin{equation}\label{eqs20240605a}
  \begin{aligned}
  \left(\frac{n-qN}{2}\right)^{-\beta}\sum_{i=\lceil \frac{n-qN}{2} \rceil+1}^{n-qN}(n-i-qN+1)^{\alpha-1}&\leq \left(\frac{n-qN}{2}\right)^{-\beta}\sum_{i=\lceil \frac{n-qN}{2} \rceil+1}^{n-qN}\int_{i-1}^{i}{(n-s-qN)^{\alpha-1}}ds \\
  &\leq \left(\frac{n-qN}{2}\right)^{-\beta} \int_{\frac{n-qN}{2}}^{n-qN}{(n-s-qN)^{\alpha-1}}ds \\
  &=\frac{1}{\alpha}\left(\frac{n-qN}{2}\right)^{\alpha-\beta}.
\end{aligned}
\end{equation}
Combining with Lemma \ref{jullemma5.4}, it yields
$$
\sum_{j=qN+1}^n{P_{n-j}\left( j-q N \right) ^{-\beta}}\lesssim \Gamma(2-\alpha) \rho ^{\alpha}{K_{\beta ,n}\left( n-q N \right) ^{\alpha -1}},\ \ \beta \ge 1.
$$

For $\beta<1$,  Lemma \ref{jullemma5.3} gives
$$
\begin{aligned}
  \frac{1}{\Gamma \left( 1-\beta \right)}\sum_{j=qN+1}^n{P_{n-j}\left( j-q N \right) ^{-\beta}} &\leq \frac{\Gamma(2-\alpha)\rho^{\alpha}}{\Gamma(1-\beta)}\sum_{j=qN+1}^{n}\left( j-q N \right) ^{-\beta}(n-j+1)^{\alpha-1}\\
  & \leq \frac{\Gamma(2-\alpha) \rho^\alpha}{\Gamma(1-\beta)}\left(\sum_{i=1}^{n-qN-1}i^{-\beta} (n-i-qN+1)^{\alpha-1}+ (n-qN)^{-\beta}\right).
  \end{aligned}
$$
Similar to the discussion of (\ref{eqs20240605a}), we have
$$
\begin{aligned}
  \frac{1}{\Gamma \left( 1-\beta \right)}\sum_{j=qN+1}^n{P_{n-j}\left( j-q N \right) ^{-\beta}} &\leq \frac{\Gamma(2-\alpha) \rho^\alpha}{\Gamma(1-\beta)}\left(\sum_{i=1}^{n-qN-1}\int_{i-1}^{i}{s^{-\beta} (n-s-qN)^{\alpha-1}}ds+ (n-qN)^{-\beta}\right)\\
  &\leq \frac{\Gamma(2-\alpha) \rho^\alpha}{\Gamma(1-\beta)}\left(\int_{0}^{n-qN}{s^{-\beta} (n-s-qN)^{\alpha-1}}ds+ (n-qN)^{-\beta}\right)\\
  &\leq \frac{\Gamma(2-\alpha) \rho^\alpha}{\Gamma(1-\beta)}\left(\mathcal{B}(1-\beta, \alpha)(n-qN)^{\alpha-\beta}+ (n-qN)^{-\beta}\right)\\
  &\lesssim \frac{\Gamma(2-\alpha) \rho^\alpha}{\Gamma(1-\beta+\alpha)}(n-qN)^{\alpha-\beta},
\end{aligned}
$$
where $\mathcal{B}(\cdot,\cdot)$ represents the Beta function.
\end{proof}

 Let $Z_1=\left( R_n^{(0)},R_{n-1}^{(0)},\cdots ,R_1^{(0)} \right)^T $, $Z_2=\left( 1,1,\cdots ,1 \right)^T\in\mathds{R}^n $ and
\begin{equation}\label{eqs5 17}
    J=\begin{bmatrix}0&\cdots&0&P_0&\cdots&P_{n-N-2}&P_{n-N-1}\\0&\cdots&0&0&\cdots&P_{n-N-3}&P_{n-N-2}\\\vdots&&\vdots&\vdots&\ddots&\vdots&\vdots\\0&\cdots&0&0&\cdots&0&P_0\\0&\cdots&0&0&\cdots&0&0\\\vdots&&\vdots&\vdots&&\vdots&\vdots\\0&\cdots&0&0&\cdots&0&0\end{bmatrix}_{n\times n},
\end{equation}
where $R_k^{(0)}=\sum_{j=1}^{k}P_{k-j}\left| (R_t^\alpha)^j \right|,k=1,2,\cdots,n$, $(i-1)N+1\leq n \leq iN$ and $1\leq i\leq K$. In fact, by observing the upper triangular matrix $J$, it is easy to check that $J^q=0$ for  $q\geq i$. Next, we can obtain some results on $P_l$ further.

\begin{lem}\label{jullemma5.6} For the first element $R_n^{(0)}$ of $Z_1$, if $0<\alpha \leq \frac{1}{2}$, then
\begin{equation}\label{eqs5 18}
R_n^{(0)}\lesssim \left\{ \begin{aligned}
	&\Gamma \left( 2-\alpha \right) K_{\beta _1,n}\rho ^{\alpha}n^{\alpha -1},\qquad \qquad \qquad \qquad \qquad \qquad \qquad \quad \ i=1,\\
	&\Gamma \left( 2-\alpha \right) K_{\beta _1,n}\rho ^{\alpha}n^{\alpha -1}+\Gamma \left( 2-\alpha \right) K_{\beta _2,n}\rho ^{2\alpha}\left( n-N \right) ^{\alpha -1},\qquad i=2,\\
	&\begin{aligned}
	&\Gamma \left( 2-\alpha \right) K_{\beta _1,n}\rho ^{\alpha}n^{\alpha -1}+\Gamma \left( 2-\alpha \right) K_{\beta _2,n}\rho ^{2\alpha}\left( n-N \right) ^{\alpha -1}  \\
	&+\Gamma \left( 2-\alpha \right) \sum_{l=3}^i{\frac{\Gamma \left( \left( l-2 \right) \alpha \right) \rho ^{l\alpha}\left( n-\left( l-1 \right) N \right) ^{\left( l-1 \right) \alpha -1}}{\Gamma \left( \left( l-1 \right) \alpha \right)}},\qquad i\geq 3;
\end{aligned}
\end{aligned} \right.
\end{equation}
if $\frac{1}{2}<\alpha<1$, then
\begin{equation}\label{eqs5 19}
R_n^{(0)}\lesssim \left\{ \begin{aligned}
        &\Gamma \left( 2-\alpha \right) K_{\beta _3,n}\rho ^{\alpha} n^{\alpha -1},&i=1,\\
        &\Gamma \left( 2-\alpha \right) K_{\beta _3,n}\rho ^{\alpha} n^{\alpha -1}+ \Gamma \left( 2-\alpha \right)\sum_{l=2}^{i}{\frac{\rho^{l\alpha}\Gamma(l\alpha-1)}{\Gamma((l+1)\alpha-1)}(n-(l-1)N)^{(l+1)\alpha-2}},&i\geq 2,
\end{aligned} \right.
\end{equation}
where $\beta_1=1+\alpha, \beta_2=1, \beta_3=2-\alpha$.\\
\end{lem}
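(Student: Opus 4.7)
The plan is to substitute the truncation error bound from Lemma 2.1 into the defining sum
$$
R_n^{(0)}=\sum_{j=1}^n P_{n-j}\,|(R_t^\alpha)^j|,
$$
and then interchange the order of summation so that Lemma 3.3 applies to the inner sum for each fixed $l$. Specifically, for any $j$ with $(m-1)N+1\le j\le mN$, the bound (\ref{eqs5 14}) is a finite sum over $l=1,\dots,m$. Splitting $j=1,\dots,n$ according to the subinterval containing it and swapping the order of summation yields
$$
R_n^{(0)}\lesssim \sum_{l=1}^{i}\rho^{(l-1)\alpha}\sum_{j=(l-1)N+1}^{n} P_{n-j}\,(j-(l-1)N)^{e_l},
$$
where $e_l=(l-2)\alpha-1$ when $0<\alpha\le\tfrac{1}{2}$ and $e_l=l\alpha-2$ when $\tfrac{1}{2}<\alpha<1$.

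For each fixed $l$, I would invoke Lemma 3.3 with $q=l-1$ and $\beta=-e_l$, using (\ref{eqs5 15}) when $\beta\ge 1$ and (\ref{eqs5 16}) when $\beta<1$. In the regime $0<\alpha\le\tfrac{1}{2}$ one has $\beta=1-(l-2)\alpha$, so $\beta_1=1+\alpha$ for $l=1$ and $\beta_2=1$ for $l=2$ both fall under (\ref{eqs5 15}) and produce the $K_{\beta_1,n}$ and $K_{\beta_2,n}$ terms, while $\beta<1$ for $l\ge 3$ and (\ref{eqs5 16}) applies; here the identities $\Gamma(1-\beta)=\Gamma((l-2)\alpha)$ and $\Gamma(1-\beta+\alpha)=\Gamma((l-1)\alpha)$ collapse the output into the summand appearing in the third branch of (\ref{eqs5 18}). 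In the regime $\tfrac{1}{2}<\alpha<1$ one has $\beta=2-l\alpha$, so $\beta_3=2-\alpha>1$ for $l=1$ falls under (\ref{eqs5 15}) and produces the $K_{\beta_3,n}$ term, while for $l\ge 2$ the inequality $\alpha>\tfrac{1}{2}$ forces $\beta<1$, and (\ref{eqs5 16}) together with $\Gamma(1-\beta)=\Gamma(l\alpha-1)$ and $\Gamma(1-\beta+\alpha)=\Gamma((l+1)\alpha-1)$ yields exactly the tail summand of (\ref{eqs5 19}).

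The main obstacle I foresee is the index and regime bookkeeping: correctly identifying for each $l$ which branch of Lemma 3.3 is applicable (the threshold $\beta=1$ corresponds to a different critical $l$ in the two regimes), tracking the subinterval endpoints $(l-1)\tau$ through the reordering, and verifying that the Gamma-function algebra after (\ref{eqs5 16}) reduces to the precise constants shown. Beyond that, the estimate in each branch is a single application of Lemma 3.3; the powers of $\rho$ combine cleanly, since $\rho^{(l-1)\alpha}$ from the outer sum and the factor $\rho^\alpha$ produced by Lemma 3.3 give the overall weight $\rho^{l\alpha}$ appearing in the non-leading summands of both (\ref{eqs5 18}) and (\ref{eqs5 19}).
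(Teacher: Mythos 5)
Your proposal is correct and follows essentially the same route as the paper: the paper likewise splits the sum over $j$ into the $i$ delay subintervals, inserts the bound from Lemma \ref{jullemma5.2}, regroups so that each fixed $l$ contributes $\rho^{(l-1)\alpha}\sum_{j=(l-1)N+1}^{n}P_{n-j}(j-(l-1)N)^{e_l}$, and then applies Lemma \ref{jullemma5.5} with $q=l-1$, using the $\beta\ge 1$ branch for $l=1,2$ (resp.\ $l=1$) and the $\beta<1$ branch otherwise, with exactly the Gamma-function identifications you describe. The index and regime bookkeeping you flag as the main obstacle is indeed all there is to check, and it works out as you anticipate.
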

\begin{proof}
  First, for $0<\alpha \leq \frac{1}{2}$ and $i=1$, it follows from the Lemma \ref{jullemma5.2} and the Lemma \ref{jullemma5.5} that
\begin{equation}\label{eqs20240605b}
  \begin{aligned}
  R_n^{(0)}&=\sum_{j=1}^n{P_{n-j}}\left|(R_t^\alpha)^j\right|\\
  &\lesssim \sum_{j=1}^n{P_{n-j}} j^{-\alpha -1}\lesssim \Gamma \left( 2-\alpha \right) K_{\beta _1,n}\rho ^{\alpha}n^{\alpha -1}.
\end{aligned}
\end{equation}
For $i=2$, splitting $R_n^{(0)}$ into two parts and using the Lemma \ref{jullemma5.2} and the Lemma \ref{jullemma5.5}, we have
\begin{equation}\label{eqs20240605c}
\begin{aligned}
  R_n^{(0)}=&{\sum_{j=1}^{N}{P_{n-j}\left|(R_t^\alpha)^j\right|}}+\sum_{j= N +1}^n{P_{n-j}}\left|(R_t^\alpha)^j\right|\\
\lesssim &{\sum_{j=1}^{N}{P_{n-j}\sum_{m=1}^1 {\rho ^{\left( m-1 \right) \alpha}\left( j-\left( m-1 \right) N \right) ^{\left( m-2 \right) \alpha -1}}}}\\
  &+\sum_{j= N+1}^n{P_{n-j}}\sum_{m=1}^{2}{\rho ^{\left( m-1 \right) \alpha}\left( j-\left( m-1 \right) N \right) ^{\left( m-2 \right) \alpha -1}}\\
=&\sum_{j=1}^n{P_{n-j}} j^{-\alpha -1}+\rho^\alpha \sum_{j=N+1}^n{P_{n-j}}\left( j-N \right) ^{-1} \\
\lesssim & \Gamma \left( 2-\alpha \right) K_{\beta _1,n}\rho ^{\alpha}n^{\alpha -1}+ \Gamma \left( 2-\alpha \right) K_{\beta _2,n}\rho ^{2\alpha}\left( n-N \right) ^{\alpha -1}.
\end{aligned}
\end{equation}
Similarly, splitting $R_n^{(0)}$ into $i$ parts when $i\geq 3$, it is easy to know that
 $$
\begin{aligned}
  R_n^{(0)}=&\sum_{l=1}^{i-1}{\sum_{j=\left( l-1 \right) N+1}^{lN}{P_{n-j}\left|(R_t^\alpha)^j\right|}}+\sum_{j=(i-1) N +1}^n{P_{n-j}}\left|(R_t^\alpha)^j\right|\\
\lesssim &\sum_{l=1}^{i-1}{\sum_{j=\left( l-1 \right) N+1}^{lN}{P_{n-j}\sum_{m=1}^l {\rho ^{\left( m-1 \right) \alpha}\left( j-\left( m-1 \right) N \right) ^{\left( m-2 \right) \alpha -1}}}}\\
&+\sum_{j=(i-1) N+1}^n{P_{n-j}}\sum_{m=1}^{i}{\rho ^{\left( m-1 \right) \alpha}\left( j-\left( m-1 \right) N \right) ^{\left( m-2 \right) \alpha -1}}\\
=&\sum_{j=1}^n{P_{n-j}} j^{-\alpha -1}+\rho^\alpha \sum_{j=N+1}^n{P_{n-j}}\left( j-N \right) ^{-1}+\cdots \\
&+\rho ^{(i-1) \alpha}\sum_{j=(i-1) N+1}^n{P_{n-j}}\left( j-(i-1) N \right) ^{\left( i -2 \right) \alpha -1}\\
\end{aligned}
$$
For $l\geq 3$, Lemma \ref{jullemma5.5} gives
$$
\begin{aligned}
   \rho ^{(l-1) \alpha}\sum_{j=(l-1) N+1}^n{P_{n-j}}\left( j-(l-1) N \right) ^{\left( l -2 \right) \alpha -1}\lesssim \Gamma \left( 2-\alpha \right)\frac{\Gamma \left( \left( l-2 \right) \alpha \right) \rho ^{l\alpha}\left( n-\left( l-1 \right) N \right) ^{\left( l-1 \right) \alpha -1}}{\Gamma \left( \left( l-1 \right) \alpha \right)}.
\end{aligned}
$$
So
\begin{equation}\label{eqs20240605d}
\begin{aligned}
  R_n^{(0)} \lesssim &\Gamma \left( 2-\alpha \right) K_{\beta _1,n}\rho ^{\alpha}n^{\alpha -1}+\Gamma \left( 2-\alpha \right) K_{\beta _2,n}\rho ^{2\alpha}\left( n-N \right) ^{\alpha -1}\\
  &+\Gamma \left( 2-\alpha \right) \sum_{l=3}^{i}{\frac{\Gamma \left( \left( l-2 \right) \alpha \right) \rho ^{l\alpha}\left( n-\left( l-1 \right) N \right) ^{\left( l-1 \right) \alpha -1}}{\Gamma \left( \left( l-1 \right) \alpha \right)}},\ i\geq 3.
\end{aligned}
\end{equation}
Therefore, the combination of (\ref{eqs20240605b}), (\ref{eqs20240605c}) and (\ref{eqs20240605d}) yields (\ref{eqs5 18}). In addition, it is clear that, according to a similar derivation, we can obtain (\ref{eqs5 19}) (i.e. the case $\frac12<\alpha \leq 1$) only needing to pay attention to $2 -l\alpha < 1, l\geq 2$ and a trivial change in (\ref{eqs5 14}).
\end{proof}

\begin{lem}\label{jullemma20240605a}For $(i-1)N+1\leq n\leq iN$,  $ 1\leq i\leq K$ and the first element $R_{n-qN}^{(q)}$ of $J^q Z_1,q=0,1,\cdots,i-1$, if $0<\alpha \leq \frac{1}{2}$, then the following three cases hold
\begin{equation}\label{eqs5 20}
    \begin{aligned}
R_{n-qN}^{(q)} \lesssim &\left( \Gamma \left( 2-\alpha \right) \right) ^{q+1} \left( \frac{\Gamma \left( \alpha \right)\rho ^{\left( q+1 \right) \alpha}K_{\beta _1,n}(n-qN)^{\left( q+1 \right) \alpha -1}}{\Gamma \left( \left( q+1 \right) \alpha \right)}\right.\\
&+\frac{\Gamma \left( \alpha \right)\rho ^{\left( q+2 \right) \alpha}K_{\beta _2,n}\left( n-(q+1)N \right) ^{\left( q+1 \right) \alpha -1}}{\Gamma \left( \left( q+1 \right) \alpha \right)}  \\
&+\left. \sum_{l=3}^{i-q }{\frac{\Gamma((l-2)\alpha)\rho ^{\left( l+q \right) \alpha}\left( n-\left( l+q-1 \right) N \right) ^{\left( l+q-1 \right) \alpha -1}}{\Gamma \left( \left( l+q-1 \right) \alpha \right)}} \right)\ \text{for}\ q=0,1,\cdots,i-3\ \text{and}\ i\geq 3;
\end{aligned}
\end{equation}
\begin{equation}\label{eqs5 21}
  \begin{aligned}
    R_{n-qN}^{(q)}\lesssim & \left(\frac{(\Gamma(2-\alpha))^{i-1}\Gamma(\alpha)\rho^{(i-1)\alpha}K_{\beta _1,n}(n-(i-2)N)^{(i-1)\alpha-1}}{\Gamma((i-1)\alpha)}\right.\\
    &\left.+\frac{(\Gamma(2-\alpha))^{i-1}\Gamma(\alpha)\rho^{i\alpha}K_{\beta _2,n}(n-(i-1)N)^{(i-1)\alpha-1}}{\Gamma((i-1)\alpha)}\right)\ \text{for}\ q= i-2\ \text{and}\  i\geq 2;
\end{aligned}
\end{equation}
and
\begin{equation}\label{eqs5 22}
    R_{n-qN}^{(q)} \lesssim \frac{(\Gamma(2-\alpha))^{i}\Gamma(\alpha)\rho^{i\alpha}K_{\beta _1,n}(n-(i-1)N)^{i\alpha-1}}{\Gamma(i\alpha)}\ \text{for}\ q=i-1.
\end{equation}
If $\frac{1}{2}<\alpha <1 $, then the following two cases hold
\begin{equation}\label{eqs5 23}
    \begin{aligned}
        R_{n-qN}^{(q)} \lesssim& \left( \Gamma \left( 2-\alpha \right) \right) ^{q+1}\left(\frac{\Gamma \left( \alpha \right)\rho ^{\left( q+1 \right) \alpha}K_{\beta _3,n}(n-qN)^{\left( q+1 \right) \alpha -1}}{\Gamma \left( \left( q+1 \right) \alpha \right)}\right.\\
        &\left.+\sum_{l=2}^{i-q}\frac{\Gamma(l\alpha-1)\rho^{(l+q)\alpha}(n-(l+q-1)N)^{(l+q+1)\alpha-2}}{\Gamma((l+q+1)\alpha-1)}\right)\ \text{for}\ q=0,1,2,\cdots,i-2\  \text{and}\ i\geq 2;
    \end{aligned}
\end{equation}
and
\begin{equation}\label{eqs5 24}
    R_{n-qN}^{(q)} \lesssim \frac{(\Gamma(2-\alpha))^{i}\Gamma(\alpha)\rho^{i\alpha}K_{\beta _3,n}(n-(i-1)N)^{i\alpha-1}}{\Gamma(i\alpha)}\ \text{for}\ q=i-1.
\end{equation}
\end{lem}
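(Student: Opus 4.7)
The proof proceeds by induction on $q$, with the base case $q = 0$ supplied by Lemma~\ref{jullemma5.6}. The first step is to unpack the action of $J$: since its $(k,j)$-entry is $P_{j-k-N}$ for $j\ge k+N$ and zero otherwise, $J$ acts on a vector as a delayed discrete convolution with kernel $P$. Reading off the top row of $J(J^q Z_1)$ and reindexing yields the recurrence that each extra application of $J$ convolves the previous iterate once more with $P$ and advances the delay argument by $N$. The induction therefore reduces to tracking the evolution of a single generic summand of the form $\rho^{(l+q)\alpha}\bigl(n-(l+q-1)N\bigr)^{(l+q-1)\alpha-1}$, with its accompanying Gamma prefactor, under this operation.

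For each such $l$-term, I would rewrite the exponent as $\sigma-1$ with $\sigma = (l+q-1)\alpha$, so that when $\sigma < 1$ the $\beta<1$ branch of Lemma~\ref{jullemma5.5} applies (otherwise one uses the $\beta\ge 1$ branch, which produces an extra $K_{\beta,n}$ factor). One application of Lemma~\ref{jullemma5.5} promotes the exponent to $(l+q)\alpha-1$, advances the shift from $(l+q-1)N$ to $(l+q)N$, and multiplies by $\Gamma(2-\alpha)\rho^\alpha\cdot\Gamma(\sigma)/\Gamma(\sigma+\alpha)$. Iterating this $q$ times telescopes the Gamma ratios to the compact factor $\Gamma((l-2)\alpha)/\Gamma((l+q-1)\alpha)$ for $l\ge 3$, together with the prefactor $(\Gamma(2-\alpha))^{q+1}\rho^{(l+q)\alpha}$, matching exactly the $l\ge 3$ summand in (\ref{eqs5 20}). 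The $l=1, 2$ leading terms follow the same scheme; the slowly growing factors $K_{\beta_1,n}$ and $K_{\beta_2,n}$ originate from the first convolution (via the $\beta\ge 1$ branch of Lemma~\ref{jullemma5.5}) and can be pulled outside subsequent convolutions up to an absolute constant, since $K_{\beta,n}$ is monotone in $n$. The regime $\tfrac12<\alpha<1$ leading to (\ref{eqs5 23}) is handled identically, the only change being that Lemma~\ref{jullemma5.2} now provides the starting exponent $l\alpha-2$.

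The three truncated cases (\ref{eqs5 21}), (\ref{eqs5 22}), and (\ref{eqs5 24}) follow purely by counting which $l$-terms can survive after $q$ iterations. For $(i-1)N+1\le n\le iN$ and a fixed $q$, the index $n - qN$ lies in the $(i-q)$-th delay interval, so the $l$-sum from Lemma~\ref{jullemma5.6} applied at this index truncates at $l\le i-q$. When $q = i-1$ only the $l=1$ contribution survives, giving (\ref{eqs5 22}) and (\ref{eqs5 24}); when $q = i-2$ with $\alpha\le\tfrac12$ the $l=1, 2$ contributions survive and the tail sum collapses, producing (\ref{eqs5 21}). The principal obstacle is the combinatorial bookkeeping: at every inductive step one must verify that the index shifts, the telescoping Gamma ratios, and the correct branch of Lemma~\ref{jullemma5.5} all align, and that the two $\alpha$-regimes produce precisely the stated bounds without mixing logarithmic and power-type corrections. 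A secondary subtlety is that, in passing from $\sigma<1$ to $\sigma\ge 1$ during the iteration (which can occur in the $\tfrac12<\alpha<1$ regime for large $l+q$), one must switch branches of Lemma~\ref{jullemma5.5} consistently, so that the resulting constants remain independent of $q$ and $l$.
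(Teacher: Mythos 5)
Your proposal is correct and follows essentially the same route as the paper: the base case comes from Lemma \ref{jullemma5.6}, each application of $J$ is a convolution with $P$ that is evaluated by the appropriate branch of Lemma \ref{jullemma5.5} (promoting the exponent by $\alpha$, advancing the shift by $N$, and telescoping the Gamma ratios), and the truncated cases $q=i-1,\,i-2$ follow from counting which $l$-terms survive in each delay interval. The only organizational difference is that you run a single induction on $q$ with the hypothesis quantified uniformly over $i$ and the position index, whereas the paper packages the same splitting of the convolution sum into delay blocks as a nested induction on $i$ and then on $q$; both are logically complete.
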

\begin{proof}
Here, we mainly discuss the case $0<\alpha \leq \frac{1}{2}$. It follows from Lemma \ref{jullemma5.6} that (\ref{eqs5 22}) for $i=1$ and (\ref{eqs5 21}) for $i=2$ are true.
For $n>N$,
we have
\begin{equation}\label{eqs5 29}
   \begin{aligned}
      JZ_1&=\left(\sum_{j=1}^{n-N}{P_{n-N-j}R_j^{(0)}},\sum_{j=1}^{n-N-1}{P_{n-N-1-j}R_j^{(0)}},\cdots,P_0 R_1^{(0)},\underset{N}{\underbrace{0,\cdots, 0}} \right)^T\\
      &:=\left(R_{n-N}^{(1)},R_{n-N-1}^{(1)},\cdots,R_{1}^{(1)},\underset{N}{\underbrace{0,\cdots, 0}}\right)^T.
  \end{aligned}
 \end{equation}
 Notice that $ 1\leq n-N\leq N$ for $i=2$, thus Lemma \ref{jullemma5.5} and Lemma \ref{jullemma5.6} give
 \begin{equation}\label{eqs20240613b}
   \begin{aligned}
    R_{n-N}^{(1)}&= \sum_{j=1}^{n-N}{P_{n-N-j}R_j^{(0)}}\\
    &\lesssim \Gamma \left( 2-\alpha \right) K_{\beta _1,n}\rho ^{\alpha}\sum_{j=1}^{n-N}P_{n-N-j} j^{\alpha-1}\\
    &\lesssim  \frac{\left( \Gamma \left( 2-\alpha \right) \right) ^2\Gamma \left( \alpha \right)\rho ^{2\alpha}K_{\beta _1,n}(n-N)^{2\alpha -1}}{\Gamma \left( 2\alpha \right)}.
  \end{aligned}
 \end{equation}
It implies that (\ref{eqs5 22}) holds for $i=2$.

Next we consider the case $i=3$. First it is obvious that Lemma \ref{jullemma5.6} leads to (\ref{eqs5 20}) for the present case. Combining with Lemma \ref{jullemma5.5} and Lemma \ref{jullemma5.6}, for $i=3$ we get
 \begin{equation}\label{eqs20240613a}
   \begin{aligned}
    R_{n-N}^{(1)}&= \sum_{j=1}^{N}{P_{n-N-j}R_j^{(0)}}+ \sum_{j=N+1 }^{n-N}{P_{n-N-j}}R_j^{(0)}\\
    &\lesssim \Gamma \left( 2-\alpha \right) K_{\beta _1,n}\rho ^{\alpha}\sum_{j=1}^{n-N}P_{n-N-j} j^{\alpha-1}+\Gamma \left( 2-\alpha \right) K_{\beta _2,n}\rho ^{2\alpha}\sum_{j=N+1}^{n-N}P_{n-N-j}(j-N)^{\alpha-1}\\
    &\lesssim \left( \Gamma \left( 2-\alpha \right) \right) ^2 \left( \frac{\Gamma \left( \alpha \right)\rho ^{2\alpha}K_{\beta _1,n}(n-N)^{2\alpha -1}}{\Gamma \left( 2\alpha \right)}+\frac{\Gamma \left( \alpha \right)\rho ^{3\alpha}K_{\beta _2,n}\left( n-2N \right) ^{2\alpha -1}}{\Gamma \left( 2\alpha \right)} \right).
    \end{aligned}
 \end{equation}
It implies that (\ref{eqs5 21}) holds for $i=3$. Notice that
$$
  \begin{aligned}
      J^2 Z_1&=\left(\sum_{j=1}^{n-2N}{P_{n-2N-j}R_j^{(1)}},\sum_{j=1}^{n-2N-1}{P_{n-2N-1-j}R_j^{(1)}},\cdots,P_0 R_1^{(1)},\underset{2N}{\underbrace{0,\cdots, 0}} \right)^T\\
      &:=\left(R_{n-2N}^{(2)},R_{n-2N-1}^{(2)},\cdots,R_{1}^{(2)},\underset{2N}{\underbrace{0,\cdots, 0}}\right)^T,
  \end{aligned}
  $$
and $1\leq n-2N\leq N$ for $i=3$, thus Lemma \ref{jullemma5.5} and (\ref{eqs5 22}) for $i=2, q=1$ give
 $$
 \begin{aligned}
   R_{n-2N}^{(2)} & = \sum_{j=1}^{n-2N}{P_{n-2N-j}R_{(j+N)-N}^{(1)}} \\
    & \lesssim \frac{\left( \Gamma \left( 2-\alpha \right) \right) ^2 \Gamma \left( \alpha \right)\rho ^{2\alpha}K_{\beta _1,n}}{\Gamma \left( 2\alpha \right)} \sum_{j=1}^{n-2N}{P_{n-2N-j}j^{2\alpha-1}}\\
    &\lesssim  \frac{\left( \Gamma \left( 2-\alpha \right) \right) ^3 \Gamma \left( \alpha \right)\rho ^{3\alpha}K_{\beta _1,n}(n-2N)^{3\alpha -1}}{\Gamma \left( 3\alpha \right)}.
 \end{aligned}
  $$
It means that (\ref{eqs5 22}) holds for $i=3$.


Now, we adopt nested mathematical induction to discuss (\ref{eqs5 20})--(\ref{eqs5 22}) for $i> 3$. First it is obvious that (\ref{eqs5 20})--(\ref{eqs5 22}) hold for $i=1,2,3$. Assume that (\ref{eqs5 20})--(\ref{eqs5 22}) are true for $i=1,2,\cdots,r$, $r\geq 3$.
Taking $i=r+1$, we prove (\ref{eqs5 20}) firstly. According to Lemma \ref{jullemma5.6}, (\ref{eqs5 20}) apparently holds for $q = 0$. Assume that it holds when $q=m, 0\leq m<r-2$. Taking $q=m+1$, it is clear that
\begin{equation}\label{eqs20240614d}
  \begin{aligned}
    J^{m+1}Z_1&=J\left( R_{n-mN}^{(m)},R_{n-mN-1}^{(m)},\cdots R_{1}^{(m)},\underset{mN}{\underbrace{0,\cdots, 0}} \right) ^T\\
    &=\left( \sum_{j=1}^{n-(m+1)N}{P_{n-(m+1)N-j}R_{j}^{(m)}},\sum_{j=1}^{n-(m+1)N-1}{P_{n-(m+1)N-1-j}R_{j}^{(m)}},\cdots,P_0 R_1^{(m)},\underset{(m+1)N}{\underbrace{0,\cdots, 0}} \right)^T\\
    &:=\left(R_{n-(m+1)N}^{(m+1)},R_{n-(m+1)N-1}^{(m+1)},\cdots,R_{1}^{(m+1)},\underset{(m+1)N}{\underbrace{0,\cdots, 0}}  \right)^T.
\end{aligned}
\end{equation}
Splitting $R_{n-(m+1)N}^{(m+1)}$ into $r-m$ parts yields
\begin{equation}\label{eqs20240614e}
\begin{aligned}
  R_{n-(m+1)N}^{(m+1)}=&\sum_{j=1}^{N}P_{n-(m+1)N-j}R_{(j+mN)-mN}^{(m)}+\sum_{j=N+1}^{2N}P_{n-(m+1)N-j}R_{(j+mN)-mN}^{(m)}+\cdots\\
&+\sum_{j=(r-m-1)N+1}^{n-(m+1)N}P_{n-(m+1)N-j}R_{(j+mN)-mN}^{(m)}.
\end{aligned}
\end{equation}
According to the assumption that (\ref{eqs5 20})--(\ref{eqs5 22}) are true for $i=1,2,\cdots, r$. It means that the boundedness of $R^{(m)}_{n-mN}$ for $0<n-mN\leq (r-m)N$ is known. By utilizing (\ref{eqs5 20})--(\ref{eqs5 22}) for $i\leq r$ and Lemma \ref{jullemma5.5}, we reach
\begin{equation}\label{eqs20240614f}
  \begin{aligned}
R_{n-(m+1)N}^{(m+1)} \lesssim& \left( \Gamma \left( 2-\alpha \right) \right) ^{m+1}\left(\frac{\Gamma \left( \alpha \right) K_{\beta _1,n}\rho ^{\left( m+1 \right) \alpha}}{\Gamma \left( \left( m+1 \right) \alpha \right)}\sum_{j=1}^{n-(m+1)N}{P_{n-(m+1)N-j}}j^{\left( m+1 \right) \alpha -1}\right.\\
&+\frac{ \Gamma \left( \alpha \right) K_{\beta _2,n}\rho ^{\left( m+2 \right) \alpha}}{\Gamma \left( \left( m+1 \right) \alpha \right)}\sum_{j=N+1}^{n-(m+1)N}{P_{n-(m+1)N-j}}\left( j-N \right) ^{\left( m+1 \right) \alpha -1}+\cdots\\
&\left. +\frac{ \Gamma((r-m-2)\alpha)\rho ^{r \alpha}}{\Gamma \left( (r-1) \alpha \right)}\sum_{j=(r-m-1) N+1}^{n-(m+1)N}{P_{n-(m+1)N-j}}\left( j-(r-m-1)N \right) ^{(r-1) \alpha -1}\right)\\
\lesssim &\left( \Gamma \left( 2-\alpha \right) \right) ^{m+2} \left( \frac{\Gamma \left( \alpha \right)\rho ^{\left( m+2 \right) \alpha}K_{\beta _1,n}(n-(m+1)N)^{\left( m+2 \right) \alpha -1}}{\Gamma \left( \left( m+2 \right) \alpha \right)} \right. \\
&+\frac{\Gamma \left( \alpha \right)\rho ^{\left( m+3 \right) \alpha}K_{\beta _2,n}\left( n-(m+2)N \right) ^{\left( m+2 \right) \alpha -1}}{\Gamma \left( \left( m+2 \right) \alpha \right)}\\
&+\left. \sum_{l=3}^{r+1 -(m+1)}{\frac{\Gamma((l-2)\alpha)\rho ^{\left( l+m+1 \right) \alpha}\left(n-(l+m)N \right) ^{\left( l+m \right) \alpha -1}}{\Gamma \left( \left( l+m \right) \alpha \right)}} \right).
\end{aligned}
\end{equation}
The above derivation indicates that (\ref{eqs5 20}) is true for $q=m+1$. Thus by combining with (\ref{eqs20240614d}), (\ref{eqs20240614e}) and (\ref{eqs20240614f}), the mathematical induction on $q$ immediately leads to (\ref{eqs5 20}) for $i=r+1$ .

We notice that
$$
\begin{aligned}
    J^{r-1}Z_1&=J\left( J^{r-2} Z_1\right)\\
    &=\left( \sum_{j=1}^{n-(r-1)N}{P_{n-(r-1)N-j}R_{j}^{(r-2)}},\sum_{j=1}^{n-(r-1)N-1}{P_{n-(r-1)N-1-j}R_{j}^{(r-2)}},\cdots,P_0 R_1^{(r-2)},\underset{(r-1)N}{\underbrace{0,\cdots, 0}} \right)^T\\
    &:=\left(R_{n-(r-1)N}^{(r-1)},R_{n-(r-1)N-1}^{(r-1)},\cdots,R_{1}^{(r-1)},\underset{(r-1)N}{\underbrace{0,\cdots, 0}}  \right)^T.
\end{aligned}$$
Due to $N+1\leq n-(r-1)N \leq 2N$, applying Lemma \ref{jullemma5.5}, (\ref{eqs5 22}) for $i=r-1, q=r-2$ and (\ref{eqs5 21}) for $i=r, q=r-2$, it yields
$$
\begin{aligned}
    R_{n-(r-1)N}^{(r-1)}=&\sum_{j=1}^N {P_{n-(r-1)N-j}R_{(j+(r-2)N)-(r-2)N}^{(r-2)}}+\sum_{j=N+1}^{n-(r-1)N} {P_{n-(r-1)N-j}R_{(j+(r-2)N)-(r-2)N}^{(r-2)}}\\
    \lesssim&  \frac{\left( \Gamma \left( 2-\alpha \right) \right) ^{r-1}\Gamma \left( \alpha \right)\rho ^{\left( r-1 \right) \alpha}K_{\beta _1,n}}{\Gamma \left( \left( r-1 \right) \alpha \right)}\sum_{j=1}^{n-(r-1)N} {P_{n-(r-1)N-j} j^{\left( r-1 \right) \alpha -1}}\\
    &+\frac{\left( \Gamma \left( 2-\alpha \right) \right) ^{r-1}\Gamma \left( \alpha \right)\rho ^{ r  \alpha}K_{\beta _2,n}}{\Gamma \left( \left( r-1 \right) \alpha \right)}\sum_{j=N+1}^{n-(r-1)N} {P_{n-(r-1)N-j} (j-N)^{\left( r-1 \right) \alpha -1}}\\
    \lesssim & \left(\frac{(\Gamma(2-\alpha))^{r}\Gamma(\alpha)\rho^{r\alpha}K_{\beta _1,n}(n-(r-1)N)^{r\alpha-1}}{\Gamma(r\alpha)}\right.\\
    &\left.+\frac{(\Gamma(2-\alpha))^{r}\Gamma(\alpha)\rho^{(r+1)\alpha}K_{\beta _2,n}(n-rN)^{r\alpha-1}}{\Gamma(r\alpha)}\right).
\end{aligned}
$$
Hence we obtain (\ref{eqs5 21}) for $i=r+1$.
Besides, it is clear that a similar derivation can give (\ref{eqs5 22}) for $i= r+1$.
Therefore, the above derivation shows that (\ref{eqs5 20})--(\ref{eqs5 22}) are true for $i=r+1$. Now, we can conclude that (\ref{eqs5 20})--(\ref{eqs5 22}) hold for $0<\alpha \leq \frac{1}{2}$ by the mathematical induction on $i$. Actually, doing the discussion like the case $0<\alpha \leq \frac{1}{2}$, we can also deduce that (\ref{eqs5 23})--(\ref{eqs5 24}) is true for $\frac{1}{2}<\alpha <1$.
\end{proof}

\begin{lem}\label{jullemma20240605b} For $(i-1)N+1\leq n\leq iN$,  $ 1\leq i\leq K$ and the first element $R_{n-qN}^{(q)}$ of $J^q Z_1,q=0,1,\cdots,i-1$,
if $0<\alpha \leq \frac{1}{2}$, then
 \begin{equation}\label{eqs5 27}
     \sum_{q=0}^{i-1}{R_{n-qN}^{(q)}} \lesssim \left\{ \begin{aligned}
	&K_{\beta _1,n}\rho t_n^{\alpha-1},&i=1,\\
    &K_{\beta _1,n}\rho+K_{\beta _2,n}\rho^{1+\alpha}t_{n-N}^{\alpha-1}+K_{\beta _1,n}\rho t_{n-N}^{2\alpha-1},&i=2,\\
	&K_{\beta _1,n}\rho+K_{\beta _2,n}\rho^{1+\alpha} + \rho^{1+\alpha}t_{n-(i-1)N}^{(i-1)\alpha-1} +K_{\beta _2,n}\rho^{1+\alpha}t_{n-(i-1)N}^{(i-1)\alpha-1}+K_{\beta _1,n}\rho t_{n-(i-1)N}^{i\alpha-1},&i\geq 3.
\end{aligned} \right.
 \end{equation}
If $\frac{1}{2}<\alpha <1 $, then
 \begin{equation}\label{eqs5 28}
     \sum_{q=0}^{i-1}{R_{n-qN}^{(q)}} \lesssim \left\{ \begin{aligned}
	&K_{\beta _3,n}\rho t_n^{\alpha-1} ,&i=1,\\
    &K_{\beta _3,n}\rho+\rho^{2-\alpha} t_{n-N}^{3\alpha-2}, &i=2,\\
	&K_{\beta _3,n}\rho+\rho^{2-\alpha},&i\geq 3.
\end{aligned} \right.
 \end{equation}
\end{lem}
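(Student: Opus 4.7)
The plan is a straightforward term-by-term summation of the bounds from Lemma \ref{jullemma20240605a}, followed by a systematic conversion from the discrete indices $(n-qN,\rho)$ to the time variables $t_{n-qN} = (n-qN)\rho$. The governing identity is $\rho^{j\alpha}(n-qN)^{j\alpha-m} = \rho^{m}\, t_{n-qN}^{j\alpha-m}$, which I will apply to every factor appearing in \eqref{eqs5 20}--\eqref{eqs5 24}. The crucial structural observation is that whenever $q \le i-2$ one has $n-qN > N$, so $t_{n-qN} \in [\tau, T]$ lies in a fixed interval independent of $n$; consequently any power $t_{n-qN}^{\beta}$ with $q \le i-2$ is bounded by a constant and may be absorbed into the implicit constant hidden in $\lesssim$. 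Only the factor $t_{n-(i-1)N}^{\beta}$ can be small and must be kept in the final estimate.

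With this bookkeeping in hand, the cases $i=1$ and $i=2$ are immediate. For $i=1$ only $q=0$ appears, and \eqref{eqs5 22} rewrites as $K_{\beta_1,n}\,\rho\, t_n^{\alpha-1}$. For $i=2$ I combine \eqref{eqs5 21} at $q=0$ with \eqref{eqs5 22} at $q=1$: the first piece of \eqref{eqs5 21} reduces to $K_{\beta_1,n}\rho$ because $t_n > \tau$, its second piece becomes $K_{\beta_2,n}\rho^{1+\alpha}\,t_{n-N}^{\alpha-1}$, and \eqref{eqs5 22} becomes $K_{\beta_1,n}\rho\, t_{n-N}^{2\alpha-1}$, producing the second line of \eqref{eqs5 27}.

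For $i \ge 3$ I apply \eqref{eqs5 20} for $q=0,\ldots,i-3$, \eqref{eqs5 21} for $q=i-2$, and \eqref{eqs5 22} for $q=i-1$. After substitution each of the three pieces of \eqref{eqs5 20} takes the shape $\rho\,K_{\beta_1,n}\,t_{n-qN}^{(q+1)\alpha-1}$, $\rho^{1+\alpha}K_{\beta_2,n}\,t_{n-(q+1)N}^{(q+1)\alpha-1}$, or $\rho^{1+(l-1)\alpha}\,t_{n-(l+q-1)N}^{(l+q-1)\alpha-1}$ for $l=3,\ldots,i-q$. For every such term whose $t$-index $p$ satisfies $p \le i-2$ the factor $t_{n-pN}^{\beta}$ is bounded, producing contributions absorbed into one of $K_{\beta_1,n}\rho$, $K_{\beta_2,n}\rho^{1+\alpha}$, or $\rho^{1+\alpha}$ (using $\rho \lesssim 1$ to collapse higher powers). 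The surviving small-$t$ pieces are the inner-sum term with $l+q-1 = i-1$, which yields $\rho^{1+\alpha}\,t_{n-(i-1)N}^{(i-1)\alpha-1}$; the second piece of \eqref{eqs5 21} at $q=i-2$, which yields $K_{\beta_2,n}\rho^{1+\alpha}\,t_{n-(i-1)N}^{(i-1)\alpha-1}$; and \eqref{eqs5 22} at $q=i-1$, which yields $K_{\beta_1,n}\rho\,t_{n-(i-1)N}^{i\alpha-1}$. Adding these five groups reproduces \eqref{eqs5 27}.

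The regime $\tfrac12 < \alpha < 1$ is handled by the same procedure applied to \eqref{eqs5 23}--\eqref{eqs5 24}, with the inner $l$-sum of \eqref{eqs5 23} contributing the extra factor $\rho^{2-\alpha}\,t_{n-(i-1)N}^{3\alpha-2}$ when $i=2$ and collapsing into an $O(\rho^{2-\alpha})$ constant when $i \ge 3$. I anticipate no real analytic difficulty beyond the bookkeeping: the main obstacle is to identify correctly, for each cross-term generated by substituting \eqref{eqs5 20}--\eqref{eqs5 24} into the sum, which $t$-factor lies in $[\tau,T]$ and can be absorbed versus which must be retained. Once the monotonicity $t_{n-pN} \ge \tau$ for $p \le i-2$ is observed, the remainder of the verification is routine algebra.
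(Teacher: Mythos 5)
Your proposal is correct and follows essentially the same route as the paper's own proof: sum the bounds of Lemma \ref{jullemma20240605a} over $q$, rewrite each term via $\rho^{j\alpha}(n-pN)^{j\alpha-m}=\rho^{m}t_{n-pN}^{j\alpha-m}$, and absorb every factor $t_{n-pN}^{\beta}$ with $p\le i-2$ into the constant since $t_{n-pN}\ge\tau$, retaining only the $t_{n-(i-1)N}$ factors. The only blemish is the intermediate exponent $\rho^{1+(l-1)\alpha}$ for the inner-sum terms, which should be exactly $\rho^{1+\alpha}$ independent of $l$; this does not affect your final identification of the surviving terms.
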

\begin{proof}
For $0<\alpha \leq \frac{1}{2} $, it follows from Lemma \ref{jullemma20240605a} and the definition of $K_{\beta,n}$ that we have the following three cases
$$
\begin{aligned}
    R_{n-qN}^{(q)}
    &\lesssim K_{\beta _1,n}\rho t_{n-(i-1)N}^{i\alpha-1},\ \text{for}\ q=i-1;
\end{aligned}
$$
$$
\begin{aligned}
    R_{n-qN}^{(q)}
    \lesssim& K_{\beta _1,n}\rho t_{n-(i-2)N}^{(i-1)\alpha-1}+K_{\beta _2,n}\rho^{1+\alpha}t_{n-(i-1)N}^{(i-1)\alpha-1},\ \text{for}\ q=i-2,\  i\geq 2
\end{aligned}
$$
and
$$
\begin{aligned}R_{n-qN}^{(q)}
&\lesssim K_{\beta _1,n}\rho t_{n-qN}^{(q+1)\alpha-1}+K_{\beta _2,n}\rho^{1+\alpha}t_{n-(q+1)N}^{(q+1)\alpha-1}+\sum_{l=3}^{i-q-1}\rho^{1+\alpha}t_{n-(l-q-1)N}^{(l+q-1)\alpha-1}+\rho^{1+\alpha}t_{n-(i-1)N}^{(i-1)\alpha-1},\ \text{for}\  q=0,1,\cdots,i-3,\  i\geq 3.
 \end{aligned}
$$
Therefore
$$
\sum_{q=0}^{i-1}{R_{n-qN}^{(q)}} \lesssim \left\{ \begin{aligned}
	&K_{\beta _1,n}\rho t_n^{\alpha-1},&i=1,\\
    &K_{\beta _1,n}\rho+K_{\beta _2,n}\rho^{1+\alpha}t_{n-N}^{\alpha-1}+K_{\beta _1,n}\rho t_{n-N}^{2\alpha-1},&i=2,\\
	&K_{\beta _1,n}\rho+K_{\beta _2,n}\rho^{1+\alpha} + \rho^{1+\alpha}t_{n-(i-1)N}^{(i-1)\alpha-1} +K_{\beta _2,n}\rho^{1+\alpha}t_{n-(i-1)N}^{(i-1)\alpha-1}+K_{\beta _1,n}\rho t_{n-(i-1)N}^{i\alpha-1},&i\geq 3.
\end{aligned} \right.
$$
In addition, applying the analogous derivation to the case $0<\alpha \leq \frac{1}{2}$, it is clear that we can obtain (\ref{eqs5 28}) easily.
\end{proof}

\begin{lem}\label{jullemma20240605c}
Assume that $(i-1)N+1\leq n\leq iN$ and $ 1\leq i\leq K$. Then $J^q Z_2$ can be bounded by
 \begin{equation}\label{eqs5 25}
     J^q Z_2\lesssim \frac{(\Gamma(2-\alpha))^{q-1}}{\Gamma(q\alpha+1)}\left(t_{n-qN}^{q\alpha},t_{n-qN-1}^{q\alpha},\cdots,t_{1}^{q\alpha},\underset{qN}{\underbrace{0,\cdots, 0}}\right)^T,\ q=1,2,\cdots,i-1.
 \end{equation}
Let the first element of $J^q Z_2$ be $\epsilon_{n-qN}^{(q)}$ and $\lambda$ be a non-negative constant. Then satisfies
 \begin{equation}\label{eqs5 26}
     \sum_{q=0}^{i-1} \lambda^q \epsilon_{n-qN}^{(q)}\lesssim 1+\frac{\lambda}{\Gamma(\alpha+1)}t_{n-N}^{\alpha}+\cdots+\frac{\lambda^{i-1}(\Gamma(2-\alpha))^{i-2}}{\Gamma((i-1)\alpha+1)}t_{n-(i-1)N}^{(i-1)\alpha}.
 \end{equation}
\end{lem}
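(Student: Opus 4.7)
The plan is to establish the componentwise bound (3.7) by induction on $q$, and then extract (3.8) by restricting to the first component.

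The base case $q=1$ is immediate. The $r$-th row of $J$ has entries $(J)_{r,k}=P_{k-N-r}$ for $k=N+r,\ldots,n$ and zeros elsewhere, so $(JZ_2)_r=\sum_{m=0}^{n-N-r}P_m$, which by Lemma \ref{jullemma5.3}(iii) is bounded by $\frac{t_{n-N-r+1}^{\alpha}}{\Gamma(\alpha+1)}$. This matches (3.7) with $q=1$, the trailing $N$ entries being zero automatically.

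Now assume (3.7) holds at $q=m$, i.e., $(J^m Z_2)_k\lesssim \frac{(\Gamma(2-\alpha))^{m-1}}{\Gamma(m\alpha+1)}\,t_{n-mN-k+1}^{m\alpha}$ for $k=1,\ldots,n-mN$ and zero otherwise. Computing the $r$-th entry of $J^{m+1}Z_2=J(J^m Z_2)$ row by row and reindexing with $j=n-mN-k+1$ (mirroring the manipulation already used to derive $JZ_1$ at the start of the section) gives
$$
(J^{m+1}Z_2)_r\lesssim\frac{(\Gamma(2-\alpha))^{m-1}\rho^{m\alpha}}{\Gamma(m\alpha+1)}\sum_{j=1}^{M_r}P_{M_r-j}\,j^{m\alpha},\qquad M_r:=n-(m+1)N-r+1.
$$
Bounding $P_l\leq\Gamma(2-\alpha)\rho^\alpha(l+1)^{\alpha-1}$ via Lemma \ref{jullemma5.3}(ii), then comparing the resulting discrete sum to its continuous counterpart $\int_0^{M_r}(M_r-s)^{\alpha-1}s^{m\alpha}\,ds$ (exactly as done in estimate (3.6) inside Lemma \ref{jullemma5.5}), and applying the Beta identity $\mathcal{B}(m\alpha+1,\alpha)=\frac{\Gamma(m\alpha+1)\Gamma(\alpha)}{\Gamma((m+1)\alpha+1)}$, the factor $\Gamma(m\alpha+1)$ in the denominator cancels; the remaining $\alpha$-only constant $\Gamma(\alpha)$ (bounded for fixed $\alpha\in(0,1)$) is absorbed into $\lesssim$. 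What survives is precisely $\frac{(\Gamma(2-\alpha))^m}{\Gamma((m+1)\alpha+1)}\,t_{M_r}^{(m+1)\alpha}$, which is (3.7) at $q=m+1$; the nilpotency $J^{q}=0$ for $q\geq i$ takes care of the trailing zeros.

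For (3.8), restricting (3.7) to the first component gives $\epsilon_{n-qN}^{(q)}\lesssim\frac{(\Gamma(2-\alpha))^{q-1}}{\Gamma(q\alpha+1)}t_{n-qN}^{q\alpha}$ for $q\geq1$, while $\epsilon_n^{(0)}=(Z_2)_1=1$. Multiplying by $\lambda^q$ and summing over $q=0,\ldots,i-1$ reproduces the right-hand side of (3.8) term by term. I expect the main obstacle to be the bookkeeping in the induction: one must verify that each application of the step supplies exactly one additional factor of $\Gamma(2-\alpha)$ (coming from Lemma \ref{jullemma5.3}(ii)) and produces the denominator $\Gamma(q\alpha+1)$ via the telescoping Beta identity, while all other $\alpha$-only factors (in particular $\Gamma(q\alpha)$, $\Gamma(\alpha)$, and beta remainders) remain uniformly bounded in $q\leq K$ and so get safely absorbed into the implicit $\lesssim$ constant.
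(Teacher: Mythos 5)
Your proposal is correct and follows essentially the same route as the paper: the base case via Lemma \ref{jullemma5.3}(iii), an induction on $q$ whose step is exactly the $\beta<1$ convolution estimate of Lemma \ref{jullemma5.5} (which the paper cites directly where you re-derive it inline via the integral comparison and the Beta identity), and then (\ref{eqs5 26}) by reading off the first component with $\epsilon_n^{(0)}=1$. The $\Gamma$-factor bookkeeping you flag as the main risk works out precisely as you describe, matching the paper's computation.
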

\begin{proof}
  First the Lemma \ref{jullemma5.3} gives
$$
  \begin{aligned}
      JZ_2&=\left(\sum_{j=1}^{n-N}{P_{n-N-j}},\sum_{j=1}^{n-N-1}{P_{n-N-1-j}},\cdots,P_0 ,\underset{N}{\underbrace{0,\cdots, 0}} \right)^T\\
      &\leq \frac{1}{\Gamma(\alpha+1)}\left(t_{n-N}^{\alpha},t_{n-N-1}^{\alpha},\cdots,t_{1}^{\alpha},\underset{N}{\underbrace{0,\cdots, 0}}\right)^T.
  \end{aligned}
  $$
It implies that (\ref{eqs5 25}) holds for $q=1$. Suppose that (\ref{eqs5 25})  is true when $q=m$, i.e.,
$$
  \begin{aligned}
      J^{m}Z_2&\lesssim \frac{(\Gamma(2-\alpha))^{m-1}}{\Gamma(m\alpha+1)}\left(t_{n-mN}^{m\alpha},t_{n-mN-1}^{m\alpha},\cdots,t_{1}^{m\alpha},\underset{mN}{\underbrace{0,\cdots, 0}}\right)^T\\
      &:=\frac{(\Gamma(2-\alpha))^{m-1}}{\Gamma(m\alpha+1)}\left(\epsilon_{n-mN}^{(m)},\epsilon_{n-mN-1}^{(m)},\cdots,\epsilon_{1}^{(m)},\underset{mN}{\underbrace{0,\cdots, 0}}\right)^T.
  \end{aligned}
  $$
Then, for $q=m+1$ we have
$$
\begin{aligned}
    J^{m+1}Z_2&=J(J^m Z_2)\\
    &\lesssim \frac{(\Gamma(2-\alpha))^{m-1}}{\Gamma(m\alpha+1)}J\left(\epsilon_{n-mN}^{(m)},\epsilon_{n-mN-1}^{(m)},\cdots,\epsilon_{1}^{(m)},\underset{mN}{\underbrace{0,\cdots , 0}}\right)^T\\
    &=\frac{(\Gamma(2-\alpha))^{m-1}}{\Gamma(m\alpha+1)}\left(\sum_{j=1}^{n-(m+1)N}P_{n-(m+1)N-j}\epsilon_{j}^{(m)},\cdots,P_0\epsilon_{1}^{(m)},\underset{(m+1)N}{\underbrace{0,\cdots, 0}}\right)^T.
\end{aligned}
$$
Since Lemma \ref{jullemma5.5} implies that
$$
\begin{aligned}
    \sum_{j=1}^{n-(m+1)N}P_{n-(m+1)N-j}\epsilon_{j}^{(m)}&\lesssim \frac{\Gamma(2-\alpha)\Gamma(1+m\alpha)\rho^{(m+1)\alpha}}{\Gamma(1+(m+1)\alpha)}(n-(m+1)N)^{(m+1)\alpha}\\
    &=\frac{\Gamma(2-\alpha)\Gamma(1+m\alpha)}{\Gamma(1+(m+1)\alpha)}t_{n-(m+1)N}^{(m+1)\alpha},
\end{aligned}
$$
we reach
$$
 J^{m+1}Z_2\lesssim \frac{(\Gamma(2-\alpha))^{m}}{\Gamma((m+1)\alpha+1)}\left(t_{n-(m+1)N}^{(m+1)\alpha},t_{n-(m+1)N-1}^{(m+1)\alpha},\cdots,t_{1}^{(m+1)\alpha},\underset{(m+1)N}{\underbrace{0,\cdots, 0}}\right)^T.
$$
Therefore, the mathematical induction indicates that (\ref{eqs5 25}) is true and
\begin{equation}\label{eqs20240606a}
  \epsilon_{n-qN}^{(q)}\lesssim \frac{(\Gamma(2-\alpha))^{q-1}}{\Gamma(q\alpha+1)}t_{n-qN}^{q\alpha},\ q=1,2,\cdots,i-1.
\end{equation}
 Summing on $\epsilon_{n-qN}^{(q)}$ from $q=0$ to $i-1$, it follows from (\ref{eqs20240606a}) that (\ref{eqs5 26}) can be obtained immediately, where $\epsilon_{n}^{(0)}=1$  is used.
\end{proof}

 Now we state the main result of this section.
 \newtheorem{thm}{Theorem}[section]
\begin{thm}\label{jullemma5.7}(Discrete fractional Gr\"onwall inequality with constant delay term) Let $y^n,(i-1)N+1\leq n \leq iN,0\leq i\leq K$ be a non-negative real sequence and satisfy
\begin{equation}\label{eqs20240607a}
  D_{N}^{\alpha}y^n\leq \lambda y^{n-N}+\left|(R_t^\alpha)^n\right|+z^n,\ n=1,2,\cdots,KN,
\end{equation}
where $(R_t^\alpha)^n$ is the local truncation error of L1 scheme,  $\lambda$ is a non-negative constant and $z^n$ is a non-negative real bounded sequence. If $0<\alpha\le \frac{1}{2}$, then
\begin{equation}\label{eqs5 35}
    y^n \lesssim \left\{ \begin{aligned}
	&\Phi_n C_{\alpha,n,\tau,\lambda}+K_{\beta _1,n}\rho t_n^{\alpha-1},&i=1,\\
    &\Phi_n C_{\alpha,n,\tau,\lambda}+K_{\beta _1,n}\rho+K_{\beta _2,n}\rho^{1+\alpha}t_{n-N}^{\alpha-1}+K_{\beta _1,n}\rho t_{n-N}^{2\alpha-1},&i=2,\\
&\Phi_n C_{\alpha,n,\tau,\lambda}+K_{\beta _1,n}\rho+K_{\beta _2,n}\rho^{1+\alpha}
+\rho^{1+\alpha}t_{n-(i-1)N}^{(i-1)\alpha-1} +K_{\beta _2,n}\rho^{1+\alpha}t_{n-(i-1)N}^{(i-1)\alpha-1}+K_{\beta _1,n}\rho t_{n-(i-1)N}^{i\alpha-1},&i\geq 3.
\end{aligned} \right.
\end{equation}
If $\frac{1}{2}<\alpha<1$, then
\begin{equation}\label{eqs5 36}
y^n \lesssim \left\{ \begin{aligned}
	&\Phi_nC_{\alpha,n,\tau,\lambda}+K_{\beta _3,n}\rho t_n^{\alpha-1} ,&i=1,\\
    &\Phi_nC_{\alpha,n,\tau,\lambda}+K_{\beta _3,n}\rho+\rho^{2-\alpha} t_{n-N}^{3\alpha-2}, &i=2,\\
	&\Phi_nC_{\alpha,n,\tau,\lambda}+K_{\beta _3,n}\rho +\rho^{2-\alpha},&i\geq 3,
\end{aligned} \right.
\end{equation}
where $C_{\alpha,n,\tau,\lambda}=1+\frac{\lambda}{\Gamma(\alpha+1)}t_{n-N}^{\alpha}+\cdots+\frac{\lambda^{i-1}(\Gamma(2-\alpha))^{i-2}}{\Gamma((i-1)\alpha+1)}t_{n-(i-1)N}^{(i-1)\alpha}$ and $\Phi_n:=y^0+\lambda \underset{{1\leq j \leq N}}{\max}y^{n-N} \frac{t_N^\alpha}{\Gamma(1+\alpha)}+\underset{1\leq j\leq n}{\max}z^j\frac{t_n^\alpha}{\Gamma(1+\alpha)}$.
\end{thm}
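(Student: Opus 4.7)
The strategy is to invert the discrete convolution $D_N^\alpha$ using the weights $P_\ell$, convert the resulting scalar inequality into a vector inequality $\vec Y \le \Phi_n Z_2 + \lambda J \vec Y + Z_1$, iterate it $i$ times (the iteration terminates because $J^{\,i}=0$ whenever $(i-1)N+1 \le n \le iN$), and finally invoke Lemmas \ref{jullemma20240605b} and \ref{jullemma20240605c} to estimate the two resulting geometric-in-$J$ sums.

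\emph{Step 1 (inversion of $D_N^\alpha$).} I would multiply the hypothesis (\ref{eqs20240607a}) by $P_{n-j}$ and sum over $j=1,\dots,n$. On the left, swapping the order of summation and using the key identity $\sum_{j=k}^{n} P_{n-j} a_{j-k}=1$ from Lemma \ref{jullemma5.3}(i) collapses the double sum into a telescoping $\sum_{k=1}^{n}\nabla_t y^k = y^n - y^0$. On the right, the three terms become $\lambda\sum_{j=1}^n P_{n-j} y^{j-N}$, the quantity $R_n^{(0)}$ (which is precisely the first component of $Z_1$), and the source piece $\sum_{j=1}^{n} P_{n-j} z^j$. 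Splitting the delay sum at $j=N$ treats $y^{j-N}$ for $j\le N$ as initial data; Lemma \ref{jullemma5.3}(iii) bounds $\sum_{j=1}^N P_{n-j}\le t_N^\alpha/\Gamma(1+\alpha)$ and $\sum_{j=1}^n P_{n-j} z^j \le \max_j z^j\cdot t_n^\alpha/\Gamma(1+\alpha)$. These three contributions combine into the constant $\Phi_n$ defined in the statement, while the surviving piece $\lambda\sum_{j=N+1}^{n} P_{n-j} y^{j-N}$ is exactly the first component of $\lambda J \vec Y$.

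\emph{Step 2 (vectorization and iteration).} Because the same derivation applies to $y^m$ for any $1\le m\le n$ with $\Phi_m\le\Phi_n$, the inequalities assemble into the componentwise vector inequality
\[
\vec Y \;\le\; \Phi_n\, Z_2 + \lambda J\,\vec Y + Z_1,
\]
where $\vec Y = (y^n,y^{n-1},\dots,y^1)^T$. Substituting this bound back into its own right-hand side and repeating, the series terminates at the $(i-1)$st iterate by the nilpotency property $J^{\,i}=0$ noted after (\ref{eqs5 17}). This yields the closed-form majorization
\[
\vec Y \;\le\; \Phi_n \sum_{q=0}^{i-1}\lambda^q J^q Z_2 \;+\; \sum_{q=0}^{i-1}\lambda^q J^q Z_1.
\]

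\emph{Step 3 (reading off the stated bounds).} Extracting the first component, the initial/source contribution becomes $\Phi_n\sum_{q=0}^{i-1}\lambda^q\epsilon_{n-qN}^{(q)}$, which is estimated by $\Phi_n\, C_{\alpha,n,\tau,\lambda}$ by Lemma \ref{jullemma20240605c} (equation (\ref{eqs5 26})). The truncation-error contribution is $\sum_{q=0}^{i-1} R_{n-qN}^{(q)}$ (here I drop the factor $\lambda^q$ since, being a non-negative constant, it can be absorbed into the generic $\lesssim$-constant, or otherwise tracked trivially); Lemma \ref{jullemma20240605b} supplies exactly the right-hand sides of (\ref{eqs5 27})-(\ref{eqs5 28}), split according to whether $0<\alpha\le\tfrac12$ or $\tfrac12<\alpha<1$ and according to the three sub-cases $i=1$, $i=2$, $i\ge 3$. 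Adding the two bounds gives (\ref{eqs5 35}) and (\ref{eqs5 36}) respectively, since the case division in the theorem is precisely the one inherited from Lemma \ref{jullemma20240605b}.

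\emph{Expected obstacle.} No single algebraic step is hard---each is essentially an invocation of an already-proven lemma---but the bookkeeping of Step 2 is the delicate point. One must verify that $J$ indeed captures one full ``delay substitution'' (shift by $N$ combined with convolution against $P_\ell$), and that the uniform majorant $\Phi_n Z_2$ correctly absorbs every iterate-level contribution of initial data and source (this uses that the coefficients $P_\ell$, the initial values, and $z^j$ are non-negative and that $\Phi_m$ is monotone in $m$). Once this is checked, combining Lemmas \ref{jullemma20240605b} and \ref{jullemma20240605c} with Step 1 finishes the proof mechanically.
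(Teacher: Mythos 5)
Your proposal is correct and follows essentially the same route as the paper: multiply by $P_{n-j}$, collapse the left side via Lemma \ref{jullemma5.3}(i), absorb the initial-data and source terms into $\Phi_n$, write the result as $Y\le \lambda JY+Z_1+\Phi_n Z_2$, iterate using the nilpotency $J^i=0$, and conclude with Lemmas \ref{jullemma20240605b} and \ref{jullemma20240605c}. The only cosmetic difference is that the paper keeps the factor $\max_{0\le q\le i-1}\lambda^q$ in front of $\sum_q R^{(q)}_{n-qN}$ rather than absorbing $\lambda^q$ into the $\lesssim$ constant, which is immaterial.
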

\begin{proof}
It follows from the definition of L1 approximation (\ref{eqs3 1}) that (\ref{eqs20240607a}) can be rewritten as
\begin{equation}\label{eqs5 31}
    \sum_{k=1}^j{a_{j-k}}\nabla _ty_k\leq \lambda y^{j-N}+\left|(R_t^\alpha)^j\right|+z^j.
\end{equation}
Multiplying (\ref{eqs5 31}) by $P_{n-j} $ and summing $j$ from $1$ to $n$, we get
\begin{equation}\label{eqs5 32}
  \sum_{j=1}^n{P_{n-j}\sum_{k=1}^j{a_{j-k}}\nabla _ty_k}\leq \lambda \sum_{j=1}^n{P_{n-j}}y^{j-N}+\sum_{j=1}^n{P_{n-j}}\left|(R_t^\alpha)^j\right|+\sum_{j=1}^n P_{n-j}z^j.
\end{equation}
Applying Lemma \ref{jullemma5.3} gives
\begin{equation}\label{eqs5 33}
\begin{aligned}
y^n&\leq y_0+\lambda \sum_{j=1}^n{P_{n-j}}y^{j-N}+\sum_{j=1}^n{P_{n-j}}\left|(R_t^\alpha)^j\right|+\sum_{j=1}^n P_{n-j}z^j\\
   &\leq y_0+\lambda \sum_{j=N+1}^n{P_{n-j}}y^{j-N}+\lambda \sum_{j=1}^N{P_{n-j}}\max_{1\leq j \leq N}y^{j-N}+\sum_{j=1}^n{P_{n-j}}\left|(R_t^\alpha)^j\right|+\sum_{j=1}^n P_{n-j}\max_{1\leq j\leq n}z^j\\
   &\leq y_0+\lambda \sum_{j=N+1}^n{P_{n-j}}y^{j-N}+\lambda \max_{1\leq j \leq N}y^{j-N} \frac{t_N^\alpha}{\Gamma(1+\alpha)} +\sum_{j=1}^n{P_{n-j}}\left|(R_t^\alpha)^j\right|+\max_{1\leq j\leq n}z^j\frac{t_n^\alpha}{\Gamma(1+\alpha)}\\
   &= \Phi_n+\lambda \sum_{j=N+1}^n{P_{n-j}}y^{j-N}+\sum_{j=1}^n{P_{n-j}}\left|(R_t^\alpha)^j\right|,
\end{aligned}
 \end{equation}
 where
$\Phi_n:=y^0+\lambda \underset{{1\leq j \leq N}}{\max}y^{j-N} \frac{t_N^\alpha}{\Gamma(1+\alpha)} +\underset{1\leq j\leq n}{\max}z^j\frac{t_n^\alpha}{\Gamma(1+\alpha)}$.

Next, for the fixed $n$, let $Y=\left( y^n,y^{n-1},\cdots ,y^1 \right)^T $, $\Phi_n Z_2 = \Phi_n \left( 1,1,\cdots ,1 \right)^T $ and note that
$$
Z_1=\left( \sum_{j=1}^n{P_{n-j}}\left|(R_t^\alpha)^j\right|,\sum_{j=1}^{n-1}{P_{n-1-j}}\left|(R_t^\alpha)^j\right|,\cdots ,P_0\left|(R_t^\alpha)^1\right| \right)^T.
$$
Then (\ref{eqs5 33}) can be rewritten as the following matrix form
\begin{equation}\label{eqs20240607b}
  Y\leq \lambda JY+Z_1+\Phi_n Z_2.
\end{equation}
By using (\ref{eqs20240607b}) repeatedly, one has
$$
\begin{aligned}
Y&\leq \lambda JY+Z_1+\Phi_n Z_2\\
&\leq \lambda J\left( \lambda JY+Z_1+ \Phi_nZ_2 \right) +Z_1+ \Phi_nZ_2\\
&=\lambda^2 J^2Y+\sum_{q=0}^1{\lambda^q J^q\left( Z_1+ \Phi_nZ_2 \right)}\\
&\leq \cdots \leq \sum_{q=0}^{i-1}{\lambda^q J^q\left( Z_1+ \Phi_nZ_2 \right)}.
\end{aligned}
$$
Since the first elements of $Y,\ J^q Z_1,\ R_{n-qN}^{(q)}$ are $y^n$, $R_{n-qN}^{(q)}$ and $\varepsilon_{n-qN}^{(q)}$, respectively, we have
\begin{equation}\label{eqs20240607c}
\begin{aligned}
  y^n\leq &\sum_{q=0}^{i-1}{\lambda^q \left(R_{n-qN}^{(q)} + \Phi_n\varepsilon_{n-qN}^{(q)} \right)}\\
  \leq & \underset{0\leq q\leq i-1}{\max}\lambda^q\sum_{q=0}^{i-1}{R_{n-qN}^{(q)}}+\Phi_n\sum_{q=0}^{i-1}{\lambda^q\varepsilon_{n-qN}^{(q)}}.
  \end{aligned}
\end{equation}
The combination of Lemma \ref{jullemma20240605b}, Lemma \ref{jullemma20240605c} and (\ref{eqs20240607c}) indicates that (\ref{eqs5 35}) and (\ref{eqs5 36}) hold.
\end{proof}

\section{Convergence analysis}
In this section, we ignore the stability analysis and only discuss the convergence of the numerical scheme on uniform temporal mesh, because Ref. \cite{Tan2022L1} has shown that this scheme is stable on uniform and non-uniform temporal meshes. First we introduce an useful lemma.
\begin{lem}\label{jullemma4.1}\cite{Huang2022robust}
Suppose that $v^j\in L^2(\Omega),0\leq j\leq KN$. Then the L1 scheme satisfies
$$
{\left(D_{N}^{\alpha}v^{n},v^{n}\right)\geq\left(D_{N}^{\alpha}\|v^{n}\|_0\right)\|v^{n}\|_0},\quad 1\leq n\leq KN.
$$
\end{lem}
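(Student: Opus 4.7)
The plan is to rewrite the L1 operator $D_N^\alpha v^n$ in an Abel (summation-by-parts) form that exposes a single positive coefficient in front of $v^n$, nonnegative coefficients in front of $v^j$ for $j<n$, and a negative sign on every cross term. Once this is done, a routine Cauchy--Schwarz argument on the inner product against $v^n$ will produce precisely $\|v^n\|_0$ times the same Abel expansion applied to the scalar sequence $\{\|v^j\|_0\}$, which is exactly $D_N^\alpha\|v^n\|_0$.

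Concretely, I would first reindex by setting $j=n-k$ in $D_N^\alpha v^n=\sum_{k=0}^{n-1}a_k(v^{n-k}-v^{n-k-1})$ and rearrange to obtain
$$
D_N^\alpha v^n = a_0\,v^n - \sum_{j=1}^{n-1}(a_{n-j-1}-a_{n-j})\,v^j - a_{n-1}\,v^0.
$$
By the expressions $a_k=[(k+1)^{1-\alpha}-k^{1-\alpha}]/[\Gamma(2-\alpha)\rho^\alpha]$ stated after (3.1), the sequence $\{a_k\}$ is strictly positive and strictly decreasing, so each weight $b_j:=a_{n-j-1}-a_{n-j}\ge 0$ and $a_0, a_{n-1}>0$. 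The same identity applied to the scalar sequence $\{\|v^j\|_0\}$ yields
$$
D_N^\alpha\|v^n\|_0 = a_0\|v^n\|_0 - \sum_{j=1}^{n-1}b_j\,\|v^j\|_0 - a_{n-1}\|v^0\|_0,
$$
which is the expression I will aim to recover on the right-hand side.

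Next I would take the $L^2(\Omega)$ inner product of the first identity with $v^n$, giving
$$
(D_N^\alpha v^n,v^n)= a_0\|v^n\|_0^2 - \sum_{j=1}^{n-1}b_j\,(v^j,v^n) - a_{n-1}(v^0,v^n).
$$
Since every $b_j$ and $a_{n-1}$ is nonnegative and the cross terms carry a minus sign, Cauchy--Schwarz $|(v^j,v^n)|\le\|v^j\|_0\|v^n\|_0$ yields
$$
(D_N^\alpha v^n,v^n)\ge\|v^n\|_0\Big[a_0\|v^n\|_0-\sum_{j=1}^{n-1}b_j\|v^j\|_0-a_{n-1}\|v^0\|_0\Big]=\|v^n\|_0\bigl(D_N^\alpha\|v^n\|_0\bigr),
$$
where in the last step I identified the bracket with the scalar Abel expansion above. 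The case $\|v^n\|_0=0$ is trivial since then both sides vanish.

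I do not anticipate a genuine obstacle: the content of the lemma is a standard positivity/convexity property of the L1 operator, and the whole argument hinges on the single algebraic manipulation of rewriting $D_N^\alpha$ with coefficients of fixed sign. The only point requiring a bit of care is the index bookkeeping when moving from the $\nabla_t$ form to the Abel form (so that the minus signs line up and the $a_0$, $a_{n-1}$ terms emerge correctly); after that step, Cauchy--Schwarz and re-identification with $D_N^\alpha\|v^n\|_0$ are immediate.
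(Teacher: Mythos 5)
Your argument is correct: the Abel rearrangement of $D_N^\alpha v^n$ into $a_0v^n-\sum_{j=1}^{n-1}(a_{n-j-1}-a_{n-j})v^j-a_{n-1}v^0$ with nonnegative weights (by the stated monotone decrease of the $a_k$), followed by Cauchy--Schwarz on the negatively signed cross terms, is exactly the standard proof of this positivity property. The paper itself does not prove this lemma --- it is quoted directly from the cited reference \cite{Huang2022robust} --- and your proposal reproduces the argument given there, so there is nothing further to compare.
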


Now we investigate the convergence of the fully discrete scheme (\ref{eqs3 7}).  Define the Ritz projection operator $R_h:H_0^1(0,L)\to X_h$ by
\begin{equation}\label{eqs5 1}
B\left(R_h v, w \right)=B(v, w),\quad \forall w \in X_h, v \in H_0^1(0, L).
\end{equation}
In fact, $R_h$ has the following well-known property: if $u^n\in H_0^1(\Omega) \cap H^2(\Omega)$, then
\begin{equation}\label{eqs5 3}
    \|u^n-R_hu^n\|_0\leq Ch^2\|u^n\|_2.
\end{equation}
\begin{thm}\label{thm5.1} Assume that $u$ is the solution of (\ref{eqs1 1})--(\ref{eqs1 2}) with the regularity (\ref{eqs2 1}). Then the numerical solution $U_h^n$ of (\ref{eqs3 7}) satisfies that, for $ 0<\alpha\le \frac{1}{2}$
\begin{equation}\label{eqs20240608a}
  \| u^n-U_h^n \|_0 \lesssim \left\{ \begin{aligned}
	&h^2+h^2C_{\alpha,n,\tau,b}+K_{\beta _1,n}\rho t_n^{\alpha-1},\qquad\qquad\qquad\qquad\qquad\qquad\qquad\qquad\  i=1,\\
    &h^2+h^2C_{\alpha,n,\tau,b}+K_{\beta _1,n}\rho+K_{\beta _2,n}\rho^{1+\alpha}t_{n-N}^{\alpha-1}+K_{\beta _1,n}\rho t_{n-N}^{2\alpha-1},\qquad\qquad\quad i=2,\\
&\begin{aligned}
h^2+h^2C_{\alpha,n,\tau,b}&+K_{\beta _1,n}\rho+K_{\beta _2,n}\rho^{1+\alpha}+\rho^{1+\alpha}t_{n-(i-1)N}^{(i-1)\alpha-1}\\
&+K_{\beta _2,n}\rho^{1+\alpha}t_{n-(i-1)N}^{(i-1)\alpha-1}+K_{\beta _1,n}\rho t_{n-(i-1)N}^{i\alpha-1},\qquad\qquad\qquad i\geq 3,
\end{aligned}
\end{aligned} \right.
\end{equation}
and for $\frac{1}{2}<\alpha<1$
\begin{equation}\label{eqs20240608b}
  \| u^n-U_h^n \|_0 \lesssim \left\{ \begin{aligned}
	&h^2+h^2C_{\alpha,n,\tau,b}+K_{\beta _3,n}\rho t_n^{\alpha-1} ,&i=1,\\
    &h^2+h^2C_{\alpha,n,\tau,b}+K_{\beta _3,n}\rho+\rho^{2-\alpha} t_{n-N}^{3\alpha-2}, &i=2,\\
	&h^2+h^2C_{\alpha,n,\tau,b}+K_{\beta _3,n}\rho +\rho^{2-\alpha},&i\geq 3,
\end{aligned} \right.
\end{equation}
where
$C_{\alpha,n,\tau,b}=1+\frac{|b|}{\Gamma(\alpha+1)}t_{n-N}^{\alpha}+\cdots+\frac{|b|^{i-1}(\Gamma(2-\alpha))^{i-2}}{\Gamma((i-1)\alpha+1)}t_{n-(i-1)N}^{(i-1)\alpha}$.
\end{thm}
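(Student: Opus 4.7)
The plan is the classical Ritz-projection split combined with a direct application of the discrete fractional Gr\"onwall inequality of Theorem \ref{jullemma5.7}. I would write $u^n - U_h^n = \eta^n + \theta^n$, with $\eta^n := u^n - R_h u^n$ and $\theta^n := R_h u^n - U_h^n \in X_h$. The Ritz estimate (\ref{eqs5 3}) together with the spatial bound in (\ref{eqs2 1}) gives $\|\eta^n\|_0 \lesssim h^2$ uniformly in $n$, so by the triangle inequality it suffices to bound $\|\theta^n\|_0$.

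To obtain an equation for $\theta^n$, I would test the weak form of (\ref{eqs1 1}) at $t_n$ against $v\in X_h$, substitute ${}_0^C D_t^\alpha u^n = D_N^\alpha u^n + (R_t^\alpha)^n$ from (\ref{eqs3 1}), subtract the fully discrete scheme (\ref{eqs3 7}), and use the orthogonality $B(\eta^n, v) = 0$ built into (\ref{eqs5 1}) to obtain
\[
(D_N^\alpha \theta^n, v) + B(\theta^n, v) = b(\theta^{n-N}, v) + b(\eta^{n-N}, v) - (D_N^\alpha \eta^n, v) - ((R_t^\alpha)^n, v)
\]
for all $v\in X_h$. Setting $v = \theta^n$, using the coercivity in (\ref{eq240606a}) to keep $B(\theta^n,\theta^n)\ge 0$, applying Lemma \ref{jullemma4.1} to the fractional-difference term, and dividing by $\|\theta^n\|_0$ (with the usual care when it vanishes) yields
\[
D_N^\alpha \|\theta^n\|_0 \leq |b|\,\|\theta^{n-N}\|_0 + |(R_t^\alpha)^n| + \bigl(|b|\,\|\eta^{n-N}\|_0 + \|D_N^\alpha \eta^n\|_0\bigr),
\]
which is exactly the hypothesis (\ref{eqs20240607a}) of Theorem \ref{jullemma5.7} with $y^n = \|\theta^n\|_0$, $\lambda = |b|$, and $z^n = |b|\,\|\eta^{n-N}\|_0 + \|D_N^\alpha \eta^n\|_0$.

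The step I expect to be the main obstacle is showing $\max_{1\le j\le n} z^j \lesssim h^2$. The first summand is immediate from (\ref{eqs5 3}). For $\|D_N^\alpha \eta^n\|_0$ I would exploit that $R_h$ is a purely spatial operator commuting with the temporal $D_N^\alpha$, so $D_N^\alpha \eta^n = (I - R_h)\,D_N^\alpha u^n$, and (\ref{eqs5 3}) then reduces the task to a uniform $H^2$-in-space bound on $D_N^\alpha u^n$. Such a bound would follow by writing $D_N^\alpha u^n = {}_0^C D_t^\alpha u^n - (R_t^\alpha)^n$, estimating the first piece via the $\partial_x^m$-version of (\ref{eqs2 1}) and the second via Lemma \ref{jullemma5.2}; the delicate point is that the truncation is time-singular near $t=(k-1)\tau$, but only its spatial smoothness, not its size, enters after applying $I-R_h$.

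Once $\max_{1\le j\le n}z^j \lesssim h^2$ is in hand, choosing $U_h^0 = R_h\phi(\cdot,0)$ so that $\theta^0 = 0$, the quantity $\Phi_n$ of Theorem \ref{jullemma5.7} satisfies $\Phi_n \lesssim h^2 C_{\alpha,n,\tau,|b|}$. Invoking that theorem then produces a bound on $\|\theta^n\|_0$ whose right-hand side has exactly the piecewise structure of (\ref{eqs5 35})--(\ref{eqs5 36}), namely an $h^2 C_{\alpha,n,\tau,b}$ contribution plus the pure-time residuals built from the $K_{\beta_j,n}$ and $\rho$ factors. Combining with $\|\eta^n\|_0 \lesssim h^2$ through the triangle inequality delivers the three cases of (\ref{eqs20240608a}) for $0<\alpha\le\tfrac12$ and the three cases of (\ref{eqs20240608b}) for $\tfrac12<\alpha<1$, completing the proof.
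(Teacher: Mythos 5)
Your proposal is correct and follows essentially the same route as the paper: the Ritz split $u^n-U_h^n=\eta^n+\theta^n$, the error equation tested with $\theta^n$, coercivity plus Lemma \ref{jullemma4.1} to get $D_N^\alpha\|\theta^n\|_0\le |b|\|\theta^{n-N}\|_0+|(R_t^\alpha)^n|+z^n$, and then Theorem \ref{jullemma5.7} with $\lambda=|b|$. The only (immaterial) difference is in handling $\|D_N^\alpha\eta^n\|_0$: you commute $R_h$ with $D_N^\alpha$ and argue $\max_j z^j\lesssim h^2$ directly, whereas the paper splits $D_N^\alpha\|r_h^n\|_0$ into ${}_0^CD_t^\alpha\|r_h^n\|_0\lesssim h^2$ plus an L1 truncation term that it absorbs into the $|(R_t^\alpha)^n|$ slot of the Gr\"onwall inequality; both lead to the same conclusion.
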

\begin{proof}
Let $r_h^n=u^n-R_{h}u^{n}$, $\epsilon_h^n=R_hu^n-U_h^n\in X_{h}$. Combining with (\ref{eqs1 1}), (\ref{eqs3 7}) and (\ref{eqs5 1}), the standard finite element analysis implies that
$$
(D_N^\alpha(u^n-U_h^n),v_h)+B(u^n-U_h^n,v_h)=b(u^{n-N}-U_h^{n-N},v)-((R_t^\alpha)^n,v_h),\ \forall v_h \in X_h,
$$
i.e.,
\begin{equation}\label{eqs20240608c}
  (D_N^\alpha \epsilon_h^n,v_h)+B(\epsilon_h^n),v_h)=b(\epsilon_h^{n-N},v_h)+(b r_h^{n-N}-D_N^\alpha r_h^n-(R_t^\alpha)^n,v_h),\ \forall v_h \in X_h.
\end{equation}
Take $v_h=\epsilon_h^n$ into (\ref{eqs20240608c}), according to (\ref{eq240606a}), the Lemma \ref{jullemma4.1} and the Cauchy-Schwarz inequality, it gives
$$
   D_{N}^{\alpha}\|\epsilon^{n}_h\|_0\le |b|\|\epsilon_h^{n-N}\|_0+\|(R_t^\alpha)^n\|_0+\left(\|br_h^{n-N}\|_0+\|D_N^\alpha r_h^n\|_0\right).
$$
By (\ref{eqs5 3}), we know that $\|r_h^{n-N}\|_0\leq Ch^2\|u^{n-N}\|_2$ and $D_N^\alpha \|r_h^{n}\|_0 \leq (_0^CD_t^\alpha-D_N^\alpha)\|r_h^{n}\|_0+_0^CD_t^\alpha\|r_h^{n}\|_0 \lesssim (R_t^\alpha)^{n}+h^2.$
In Theorem \ref{jullemma5.7}, taking $y^n=\|\epsilon_h^n\|_0$, $\lambda=|b|$ and $z^n=\|br_h^{n-N}\|_0+\|D_N^\alpha r_h^n\|_0$, then for $0<\alpha \leq \frac{1}{2}$
\begin{equation}\label{eqs20240608d}
  \| \epsilon_h^n \|_0 \lesssim \left\{ \begin{aligned}
	&h^2C_{\alpha,n,\tau,b}+K_{\beta _1,n}\rho t_n^{\alpha-1},&i=1,\\
    &h^2C_{\alpha,n,\tau,b}+K_{\beta _1,n}\rho+K_{\beta _2,n}\rho^{1+\alpha}t_{n-N}^{\alpha-1}+K_{\beta _1,n}\rho t_{n-N}^{2\alpha-1},&i=2,\\
    &h^2C_{\alpha,n,\tau,b}+K_{\beta _1,n}\rho+K_{\beta _2,n}\rho^{1+\alpha}+\rho^{1+\alpha}t_{n-(i-1)N}^{(i-1)\alpha-1} +K_{\beta _2,n}\rho^{1+\alpha}t_{n-(i-1)N}^{(i-1)\alpha-1}+K_{\beta _1,n}\rho t_{n-(i-1)N}^{i\alpha-1},&i\geq 3,
\end{aligned} \right.
\end{equation}
and for $\frac12 <\alpha<1$
\begin{equation}\label{eqs20240608j}
  \| \epsilon_h^n \|_0 \lesssim \left\{ \begin{aligned}
	&h^2C_{\alpha,n,\tau,b}+K_{\beta _3,n}\rho t_n^{\alpha-1} ,&i=1,\\
    &h^2C_{\alpha,n,\tau,b}+K_{\beta _3,n}\rho+\rho^{2-\alpha} t_{n-N}^{3\alpha-2}, &i=2,\\
	&h^2C_{\alpha,n,\tau,b}+K_{\beta _3,n}\rho +\rho^{2-\alpha},&i\geq 3,
\end{aligned} \right.
\end{equation}
Applying the fact that $\|u^n-U_h^n\|_0 \leq \|r_h^n\|_0+\|\epsilon_h^n\|_0$,
so (\ref{eqs20240608a}) and (\ref{eqs20240608b}) can be obtained immediately.
\end{proof}

\begin{remark}
  By observing the above convergence results, it shows that, for $(i-1)N+1<n\leq iN$, the temporal error is near $O(\rho^{i\alpha})$ as $n$ near $(i-1)N+1$, meanwhile the error is almost $O(\rho)$ when $n$ is away from $(i-1)N$. From Ref. \cite{Tan2022L1} and \cite{Bu2024Finite} (and also see (\ref{eqs2 1})), we know that the temporal smoothness of the solution to (\ref{eqs1 1})--(\ref{eqs1 2}) near $t_{(i-1)N}^{+}$ will be continuously improved as increasing $i$. Thus error results obtained above are consistent with the theory.
  Note the fact that $\ln n$ increases slowly, if we regard $\ln n$ as a constant, then Theorem \ref{thm5.1} implies that
  \begin{equation}\label{eqs20240608d}
    \max_{(i-1)N+1 \leq n\leq iN}\|u^n-U_h^n\|_0 \lesssim \rho^{\min\{1,i\alpha\}} + h^2, \quad i=1,2,\cdots,K.
  \end{equation}
It is obviously more accurate than to consider the global error $O(\rho^\alpha)$. Besides, due to the applying of the developed discrete fractional Gr\"onwall inequality, the present error bounds do not contain a fast increasing Mittag-Leffler function. Therefore, these characteristics indicate that our results are not trivial and cannot be considered as a special case of Refs. \cite{Tan2022L1,Bu2024Finite}.
\end{remark}


\section{Numerical experiment}
In this section, we will verify the established theoretical results by some numerically tests. Here, for the considered problem (\ref{eqs1 1})--(\ref{eqs1 2}), we take $b=-1$, $\tau=1$, choose the spatial domain $[0,1]$, the temporal interval $(0,3]$, the initial condition $u(x,t)=\sin(\pi x)$ and the right hand function
$$
 f(x,t) = \left\{ \begin{aligned}
	&(p\pi^2-a(x))\left[ 1-\omega_{\alpha+1}(t)\right]\sin\pi x,&0 < t\leq 1,\\
    &(p\pi^2-a(x))\left[1-\omega_{\alpha+1}(t)+\omega_{2\alpha+1}(t-1)\right]\sin\pi x,&1 < t\leq 2,\\
    &(p\pi^2-a(x))\left[1-\omega_{\alpha+1}(t)+\omega_{2\alpha+1}(t-1)-\omega_{3\alpha+1}(t-2)\right]\sin\pi x,&2 < t\leq 3.
\end{aligned} \right.
$$
It is easy to know that the exact solution of the above case is
$$
u(x,t) = \left\{ \begin{aligned}
	&\left[ 1-\omega_{\alpha+1}(t) \right]\sin\pi x,&0 < t\leq 1,\\
    &\left[1-\omega_{\alpha+1}(t)+\omega_{2\alpha+1}(t-1)\right]\sin\pi x,&1 < t\leq 2,\\
    &\left[1-\omega_{\alpha+1}(t)+\omega_{2\alpha+1}(t-1)-\omega_{3\alpha+1}(t-2)\right]\sin\pi x,&2 < t\leq 3.
\end{aligned} \right.
$$
Before using the numerical scheme (\ref{eqs3 7}) to calculate the above problem, we give some marks firstly. Define
$$
E(h,\rho,i)=\max_{(i-1)N+1 \leq n\leq iN}\|u^n-U_h^n\|_0,
$$
$$
rate_t=\log_2\left(\frac{E(h,\rho,i)}{E(h,\rho/2,i)}\right),\  rate_s= \log_2\left(\frac{E(h,\rho,i)}{E(h/2,\rho,i)}\right),
$$
where $rate_t$ will be used to examine the local temporal convergence on interval $(i-1,i], i=1,2,3$ and $rate_s$ will be considered to investigate the convergence in spatial case.

Now, based on $p=1/100,a(x)=-x/100$ for $\alpha=0.4,0.5,0.6$ and $p=1/500,a(x)=-x/500$ for $\alpha=0.8$, we observe the numerical results in Tables \ref{t1}--\ref{t6}. First we choose sufficiently small $h=1/1000$ to ensure that the spatial errors can be ignored.
The numerical results in Tables \ref{t1}--\ref{t4} with different $\alpha$ show that the convergence rates in time is $\min\{1,i\alpha\}$ for the calculated interval $(i-1,i]$, and the temporal convergence at $t_{iN},i=1,2,3$ given in Table 5 is almost 1. It is clear that these results are consistent with (\ref{eqs20240608d}), i.e., it is agree with Theorem \ref{thm5.1}. Next we examine the spatial convergence. In order to make the spatial error dominant, the time step size is taken as $1/30000$. Table \ref{t6} presents the spatial convergence rates based on the calculated temporal interval $(2,3]$ and different $\alpha$. These results is obviously consistent with the expected cases.



\begin{table}
\caption{The temporal accuracy and error of the numerical solution for $\alpha=0.4$.}\label{t1}
\begin{center}
\normalsize{
\begin{tabular}{l|llllllll}
\hline
$N$ &$i=1$         &     &$i=2$           &    &$i=3$            &  \\
\ \            &$E(h,\rho,i)$  &$rate_t$       &$E(h,\rho,i)$   &$rate_t$      &$E(h,\rho,i)$    &$rate_t$  \\
\hline
$100$   &2.5794e-02    &--      &3.7640e-03   &--       &1.4451e-03   &--\\

$200$   &1.9617e-02    &0.395  &2.2871e-03   &0.719   &7.3976e-04   &0.966\\

$400$   &1.4907e-02    &0.396  &1.3751e-03   &0.734   &3.7550e-04   &0.978\\

$800$   &1.1320e-02    &0.397  &8.1988e-04   &0.746   &1.8958e-04   &0.986\\

$1600$  &8.5923e-03    &0.398  &4.8559e-04   &0.756   &9.5385e-05   &0.991\\

$O\left(\rho^{\min\{1,i\alpha\}}\right)$  &--    &0.400  &--  &0.800  &--   &1.000\\
\hline
\end{tabular}}
\end{center}
\end{table}

\begin{table}
\caption{The temporal accuracy and error of the numerical solution for $\alpha=0.5$.}\label{t2}
\begin{center}
\normalsize{
\begin{tabular}{l|llllllll}
\hline
$N$ &$i=1$         &     &$i=2$           &    &$i=3$            &  \\
\ \            &$E(h,\rho,i)$  &$rate_t$       &$E(h,\rho,i)$   &$rate_t$      &$E(h,\rho,i)$    &$rate_t$  \\
\hline
$100$   &1.6967e-02    &--      &1.5736e-03   &--      &1.5378e-03   &--\\

$200$   &1.2029e-02    &0.496  &8.0744e-04   &0.963   &7.9431e-04   &0.953\\

$400$   &8.5222-03     &0.497  &4.1099e-04   &0.974   &4.0624e-04   &0.967\\

$800$   &6.0342e-03    &0.498  &2.0807e-04   &0.982   &2.0636e-04   &0.977\\

$1600$  &4.2709e-03    &0.499  &1.0494e-04   &0.988   &1.0433e-04   &0.984\\

$O\left(\rho^{\min\{1,i\alpha\}}\right)$  &--    &0.500  &--  &1.000  &--   &1.000\\
\hline
\end{tabular}}
\end{center}
\end{table}

\begin{table}
\caption{The temporal accuracy and error of the numerical solution for $\alpha=0.6$.}\label{t3}
\begin{center}
\normalsize{
\begin{tabular}{l|llllllll}
\hline
$N$ &$i=1$         &    &$i=2$           &    &$i=3$            &  \\
\ \            &$E(h,\rho,i)$  &$rate_t$       &$E(h,\rho,i)$   &$rate_t$      &$E(h,\rho,i)$    &$rate_t$  \\
\hline
$100$   &1.0287e-02    &--      &1.6308e-03   &--       &1.3543e-03   &--\\

$200$   &6.8003e-03    &0.597  &8.4307e-04   &0.952   &7.2139e-04   &0.909\\

$400$   &4.4923e-03    &0.598  &4.3309e-04   &0.961   &3.7811e-04   &0.932\\

$800$   &2.9664e-03    &0.599  &2.2142e-04   &0.968   &1.9584e-04   &0.949\\

$1600$  &1.9582e-03    &0.599  &1.1279e-04   &0.973   &1.0054e-04   &0.962\\

$O\left(\rho^{\min\{1,i\alpha\}}\right)$  &--    &0.600  &--  &1.000  &--   &1.000\\
\hline
\end{tabular}}
\end{center}
\end{table}

\begin{table}
\caption{The temporal accuracy and error of the numerical solution for $\alpha=0.8$.}\label{t4}
\begin{center}
\normalsize{
\begin{tabular}{l|llllllll}
\hline
$N$ &$i=1$         &     &$i=2$           &    &$i=3$            &  \\
\ \            &$E(h,\rho,i)$  &$rate_t$       &$E(h,\rho,i)$   &$rate_t$      &$E(h,\rho,i)$    &$rate_t$  \\
\hline
$100$   &3.0767e-03    &--      &2.3231e-03   &--       &8.6031e-04   &--\\

$200$   &1.7680e-03    &0.799  &1.1965e-03   &0.957   &4.2039e-04   &1.033\\

$400$   &1.0156e-03    &0.800  &6.1440e-04   &0.962   &2.0508e-04   &1.036\\

$800$   &5.8319e-04    &0.800  &3.1458e0-4   &0.966   &1.0188e-04   &1.009\\

$1600$  &3.3475e-04    &0.801  &1.6063e-04   &0.970  &5.2873e-05   &0.946\\

$O\left(\rho^{\min\{1,i\alpha\}}\right)$  &--    &0.800  &--  &1.000  &--   &1.000\\
\hline
\end{tabular}}
\end{center}
\end{table}

\begin{table}
\caption{ The temporal accuracy and error of the numerical solution for $\alpha=0.5$ at $t_{iN}$.}\label{t5}
\begin{center}
\normalsize{
\begin{tabular}{l|llllllll}
\hline
$N$ &$i=1$         &     &$i=2$           &    &$i=3$            &  \\
\ \            &$\|u^N-U_h^N\|_0$  &        &$\|u^{2N}-U_h^{2N}\|_0$   &       &$\|u^{3N}-U_h^{3N}\|_0$    &   \\
\hline
$100$   &1.6837e-03    &--      &1.5738e-03   &--       &1.0376e-03   &--\\

$200$   &8.3949e-04    &1.004  &8.0766e-04   &0.962   &5.6073e-04   &0.888\\

$400$   &4.1894e-04    &1.003  &4.1122e-04   &0.974   &2.9527e-04   &0.925\\

$800$   &2.0921e-04    &1.002  &2.0829e-04   &0.981   &1.5288e-04   &0.950\\

$1600$  &1.0454e-04    &1.001  &1.0517e-04   &0.986   &7.8251e-05   &0.966\\

$O\left(\rho\right)$  &--    &1.000  &--  &1.000  &--   &1.000\\
\hline
\end{tabular}}
\end{center}
\end{table}

\begin{table}
\caption{The spatial accuracy and error of the numerical solution for $N=30000$.}\label{t6}
\begin{center}
\normalsize{
\begin{tabular}{l|llllllll}
\hline
$M$ &$\alpha=0.4$      &     &$\alpha=0.5$   &    &$\alpha=0.6$   &       &$\alpha=0.8$  &  \\
\ \                 &$E(h,\rho,i)$           &$rate_s$       &$E(h,\rho,i)$       &$rate_s$      &$E(h,\rho,i)$        &$rate_s$         &$E(h,\rho,i)$       &$rate_s$\\
\hline
$8$   &5.8354e-03     &--      &4.6009e-03   &--       &3.6821e-03    &--        &1.9808e-03    &--   \\

$16$   &1.4620e-03    &1.997  &1.1527e-03   &1.997   &9.2256e-04   &1.997     &4.9811e-04    &1.992\\

$32$   &3.6805e-04    &1.990  &2.8804e-04   &2.001   &2.3054e-04   &2.001     &1.2626e-04    &1.980\\

$64$   &9.5144e-05    &1.952  &7.1871e-05   &2.003   &5.7530e-05   &2.003     &3.3239e-05    &1.925\\

$O\left(h^2 \right)$  &--    &2.000  &--  &2.000  &--   &2.000   &--   &2.000 \\
\hline
\end{tabular}}
\end{center}
\end{table}

\section{Conclusion}
In this paper, we use the L1/finite element scheme to solve the constant delay reaction-subdiffusion equation (\ref{eqs1 1})--(\ref{eqs1 2}), and investigate the pointwise-in-time and piecewise-in-time convergence of this numerical scheme under the uniform temporal mesh and the non-uniform multi-singularity (\ref{eqs2 1}) on the exact solution. In order to discuss the error estimate, the local truncation error of L1 scheme on uniform mesh are derived based on the regularity assumption (\ref{eqs2 1}) of the solution, and a novel discrete fractional Gr\"onwall inequality with constant delay term is established. It is worth to note that the developed Gr\"onwall inequality does not contain the increasing Mittag-Leffler function. Thus, by using this Gr\"onwall inequality, the obtained error estimates also have no Mittag-Leffler function. Another important role of this inequality is that it can ensure that the obtained error estimates have the characteristics of piecewise-in-time, i.e., the piecewise-in-time convergence order is $\min\{1,i\alpha\}$ for the considered interval $((i-1)\tau, i\tau]$, which is obviously consistent with the fact, i.e., the smoothness of the solution will be improved as the increasing $i$. Besides, we provide some numerical experiments to confirm these results.

\section{Acknowledgments}
This research is supported by the Research Foundation of Education Commission of Hunan Province of China (Grant No. 23A0126), and the National Natural Science Foundation of China (Grant Nos. 11971412, 12171466).


\begin{thebibliography}{33}
\expandafter\ifx\csname url\endcsname\relax
  \def\url#1{\texttt{#1}}\fi
\expandafter\ifx\csname urlprefix\endcsname\relax\def\urlprefix{URL }\fi
\expandafter\ifx\csname href\endcsname\relax
  \def\href#1#2{#2} \def\path#1{#1}\fi
\bibitem{Hilfer2000Applications}Hilfer, R.: Applications of fractional calculus in physics. World scientific (2000)

\bibitem{Magin2010Fractional}Magin, R.: Fractional calculus models of complex dynamics in biological tissues. Comput. Math. Appl. \textbf{59}(5), 1586--1593 (2010)

\bibitem{Jajarmi2018A}Jajarmi, A., Baleanu, D.: A new fractional analysis on the interaction of HIV with CD4+ T-cells. Chaos  \textbf{113}, 221--229 (2018)

\bibitem{Sun2018A}Sun, H.G., Zhang, Y., Baleanu, D., Chen, W. Chen, Y.Q.: A new collection of real world applications of fractional calculus in science and engineering. Commun. Nonlinear Sci. Numer. Simul. \textbf{64}: 213--231 (2018)

\bibitem{Chatterjee2021A}Chatterjee, A.N., Ahmad, B.: A fractional-order differential equation model of COVID-19 infection of epithelial cells. Chao \textbf{147}, 110952 (2021)

\bibitem{Carvalho2017A}Carvalho, A., Pinto, C.M.A.: A delay fractional order model for the co-infection of malaria and HIV/AIDS. Int. J. Dynam. Control  \textbf{5}(1), 168--186 (2017)

\bibitem{Rihan2018Applications}Rihan, F.A., Tunc, C., Saker, S.H., Lakshmanan, S. Rakkiyappan, R.: Applications of delay differential equations in biological systems. Complexity (2018)

\bibitem{Ma2020Spatiotemporal}Ma, Z.P., Huo, H.F., Xiang, H.: Spatiotemporal patterns induced by delay and cross‐fractional diffusion in a predator‐prey model describing intraguild predation.  Math. Method Appl. Sci. \textbf{43}(8), 5179--5196 (2020)

\bibitem{Kumar2023The}Kumar, P., Suat Erturk V.: The analysis of a time delay fractional Covid‐19 model via Caputo type fractional derivative. Math. Method Appl. Sci. \textbf{46}(7), 7618--7631 (2023)

\bibitem{Ding2018Analytical}Ding, X.L., Jiang, Y.L.: Analytical solutions for multi-term time-space coupling fractional delay partial differential equations with mixed boundary conditions. Commun. Nonlinear Sci. Numer. Simul. \textbf{65}, 231--247 (2018)

\bibitem{Alquran2019Delay}Alquran, M., Jaradat, I.: Delay-asymptotic solutions for the time-fractional delay-type wave equation. Physica A \textbf{527}, 121275 (2019)

\bibitem{Rihan2019A}Rihan, F.A., Al-Mdallal, Q.M., AlSakaji, H.J., Hashish, A.: A fractional-order epidemic model with time-delay and nonlinear incidence rate. Chaos \textbf{126}, 97--105 (2019)

\bibitem{Alidousti2019Stability}Alidousti, J., Ghahfarokhi, M.M.: Stability and bifurcation for time delay fractional predator prey system by incorporating the dispersal of prey[J]. Appl. Math. Model. \textbf{72}, 385--402 (2019)

\bibitem{Barman2022Modelling}Barman, D., Roy, J., Alam, S.: Modelling hiding behaviour in a predator-prey system by both integer order and fractional order derivatives. Ecol. Inform. \textbf{67}, 101483 (2022)

\bibitem{Yao2023Stability}Yao, Z.C., Yang, Z.W.: Stability and asymptotics for fractional delay diffusion‐wave equations. Math. Method Appl. Sci. \textbf{46}(14), 15208--15225 (2023)

\bibitem{Rihan2023Fractional}Rihan, F.A., Udhayakumar, K.: Fractional order delay differential model of a tumor-immune system with vaccine efficacy: Stability, bifurcation and control. Chaos \textbf{173}, 113670 (2023)

\bibitem{Mvogo2023Dynamics}Mvogo, A., Tiomela, S.A., Macías-Díaz, J.E., Bertrand, B.: Dynamics of a cross-superdiffusive SIRS model with delay effects in transmission and treatment. Nonlinear Dynam. \textbf{111}(14), 13619--13639 (2023)


%
%

\bibitem{Ouyang2011Existence}Ouyang, Z.: Existence and uniqueness of the solutions for a class of nonlinear fractional order partial differential equations with delay. Comput. Math. Appl. \textbf{61}(4), 860--870 (2011)

\bibitem{Zhu2019Existence}Zhu, B., Liu, L.S., Wu, Y.H.: Existence and uniqueness of global mild solutions for a class of nonlinear fractional reaction–diffusion equations with delay. Comput. Math. Appl. \textbf{78}(6), 1811--1818 (2019)

\bibitem{Prakash2020Exact}Prakash, P., Choudhary, S., Daftardar-Gejji, V.: Exact solutions of generalized nonlinear time-fractional reaction–diffusion equations with time delay. Eur. Phys. J. Plus \textbf{135}, 1--24 (2020)

\bibitem{Van2022On}Van Bockstal, K., Zaky, M.A., Hendy, A.S.: On the existence and uniqueness of solutions to a nonlinear variable order time-fractional reaction–diffusion equation with delay. Commun. Nonlinear Sci. Numer. Simul. \textbf{115}, 106755 (2022)

\bibitem{Zhang2017Analysis} Zhang, Q.F., Ran, M.H., Xu, D.H., Analysis of the compact difference scheme for the semilinear fractional partial differential equation with time delay. Appl. Anal. \textbf{96}, 1867--1884, (2017)

\bibitem{Li2018Convergence}Li, L.L., Zhou, B.Y., Chen, X.L., Wang, Z.Y.: Convergence and stability of compact finite difference method for nonlinear time fractional reaction–diffusion equations with delay. Appl. Math. Comput. \textbf{337}, 144--152 (2018)

\bibitem{Hendy2020Convergence}Hendy, A.S., Pimenov, V.G., Macías‐Díaz, J.E.: Convergence and stability estimates in difference setting for time‐fractional parabolic equations with functional delay. Numer. Math. Part. Differ. Equ. \textbf{36}(1), 118--132 (2020)

\bibitem{Pimenov2017A}Pimenov, V.G., Hendy, A.S.: A numerical solution for a class of time fractional diffusion equations with delay. Int. J. Appl. Math Comput. Sci. \textbf{27}(3), 477 (2017)

\bibitem{Dehghan2018A}Dehghan, M., Abbaszadeh, M.: A Legendre spectral element method (SEM) based on the modified bases for solving neutral delay distributed‐order fractional damped diffusion‐wave equation[J]. Math. Method Appl. Sci. \textbf{41}(9), 3476--3494 (2018)

\bibitem{Zaky2023An}Zaky, M.A., Van Bockstal, K., Taha, T.R., Hendy, A.S.: An L1 type difference/Galerkin spectral scheme for variable-order time-fractional nonlinear diffusion–reaction equations with fixed delay. J. Comput. Appl. Math. \textbf{420}, 114832 (2023)

\bibitem{Peng2023Convergence}Peng, S.S., Li, M., Zhao, Y.M., Wang F.L., Shi, Y.H.: Convergence and superconvergence analysis for nonlinear delay reaction–diffusion system with nonconforming finite element. Numer. Math. Part. Differ. Equ. \text{39}(1), 716--743 (2023)

\bibitem{Peng2024Unconditionally}Peng, S.S., Li, M., Zhao, Y.M., Liu, F.W., Cao, F.F.: Unconditionally convergent and superconvergent finite element method for nonlinear time-fractional parabolic equations with distributed delay. Numer. Algorithms \textbf{95}(4), 1643--1714 (2024)

\bibitem{Pimenov2017On}Pimenov, V.G., Hendy, A.S. De Staelen, R.H.: On a class of non-linear delay distributed order fractional diffusion equations. J. Comput. Appl. Math. \textbf{318}, 433--443 (2017)

\bibitem{Abbaszadeh2021A}Abbaszadeh, M., Dehghan, M.: A Galerkin meshless reproducing kernel particle method for numerical solution of neutral delay time-space distributed-order fractional damped diffusion-wave equation. Appl. Numer. Math. \textbf{169}, 44--63 (2021)

\bibitem{Zhao2018A}Zhao, Y.L., Zhu, P.Y., Luo, W.H.: A fast second-order implicit scheme for non-linear time-space fractional diffusion equation with time delay and drift term. Appl. Math. Comput. \textbf{336}, 231--248 (2018)

\bibitem{Tan2022L1}Tan, T., Bu, W.P., Xiao, A.G.: L1 method on nonuniform meshes for linear time-fractional diffusion equations with constant time delay. J. Sci. Comput. \textbf{92}(3), 98 (2022)

\bibitem{Cen2023The}Cen, D.K., Vong, S.: The Tracking of Derivative Discontinuities for Delay Fractional Equations Based on Fitted L1 Method. Comput. Methods  Appl. Math. \textbf{23}(3), 591--601 (2023)

\bibitem{Cen2023Corrected}Cen, D.K., Ou, C.X., Vong, S.: Corrected $L$-type Method for Multi-singularity Problems Arising from Delay Fractional Equations. J.  Sci. Comput. \textbf{97}(1), 15 (2023)


\bibitem{Bu2024Finite}Bu, W.P., Guan, S.Z., Xu, X.H., Tang, Y.F.: Finite element method for a generalized constant delay diffusion equation. Commun. Nonlinear Sci. Numer. Simul. \textbf{2024}, 108015 (2024)

%
%
%
%
%
%
%
%
%
%
%
%
%
%
%
%
%
%
%
%
%
%
%
%
%
%
%
%

\bibitem{Huang2022robust}Huang, C.B., Chen, H., An, N.: $\beta-$robust superconvergent analysis of a finite element method for the distributed order time-fractional diffusion equation. J. Sci. Comput. \textbf{90}(1), 44 (2022)


\bibitem{Chen2021Blow}Chen, H., Stynes, M.: Blow-up of error estimates in time-fractional initial-boundary value problems. SIMA J. Numer. Anal. \textbf{41}(2), 974--997 (2021)

\bibitem{Li2021Sharp}Li, D.F., Qin, H.Y, Zhang, J.W.: Sharp pointwise-in-time error estimate of L1 scheme for nonlinear subdiffusion equations. J. Comput. Math. \textbf{42}(3), 662--678 (2024)

\bibitem{Jiang2024Local}Jiang, Y.B., Chen, H.: Local convergence analysis of L1-ADI scheme for two-dimensional reaction-subdiffusion equation. J. Appl. Math. Comput. \textbf{2024}, 1--12 (2024)
%
%
\end{thebibliography}
\end{document}